\numberwithin{equation}{section}
\newtheorem{Theorem}{Theorem}[section]
\newtheorem{Lemma}{Lemma}[section]
\newtheorem{Corollary}{Corollary}[section]
\theoremstyle{definition}
\theoremstyle{remark}
\newtheorem{Remark}{Remark}[section]
\newtheorem{Example}{Example}[section]
\author{A.\,A. Kon'kov}
\address{Department of Differential Equations,
Faculty of Mechanics and Mathematics,
Mo\-s\-cow Lo\-mo\-no\-sov State University,
Vorobyovy Gory,
Moscow, 119992 Russia.
}
\email{konkov@mech.math.msu.su}
\author{A.\,E. Shishkov}
\address{
Center of Nonlinear Problems of Mathematical Physics,
RUDN University,
Mi\-klu\-k\-ho-Mak\-la\-ya str. 6,
Moscow, 117198 Russia.
}
\email{aeshkv@yahoo.com}
\title[On bow-up conditions]{On bow-up conditions for systems of higher order differential inequalities}
\keywords{Global solutions; Nonlinearity; Blow-up}
\subjclass{35B44, 35B08, 35J30, 35J70}
\date{}
\begin{document}

\begin{abstract}
We consider systems of the differential inequalities
$$
	\left\{
		\begin{aligned}
			&
			\sum_{|\alpha| = m_1}
			\partial^\alpha
			a_\alpha (x, u_1)
			\ge
			f_1 (u_2)
			&
			\mbox{in } {\mathbb R}^n,
			\\
			&
			\sum_{|\alpha| = m_2}
			\partial^\alpha
			b_\alpha (x, u_2)
			\ge
			f_2 (u_1)
			&
			\mbox{in } {\mathbb R}^n,
		\end{aligned}
	\right.
$$
where $n, m_1, m_2 \ge 1$ are integers and $a_\alpha$ and $b_\alpha$ are Caratheodory functions such that
$$
	|a_\alpha (x, \zeta)| + |b_\alpha (x, \zeta)| \le A |\zeta|
$$
with some constant $A > 0$ for almost all $x \in {\mathbb R}^n$ and for all $\zeta \in {\mathbb R}$.
For solutions of these systems exact blow-up conditions are obtained.
\end{abstract}

\maketitle

\section{Introduction}

We consider systems of the differential inequalities
\begin{equation}
	\left\{
		\begin{aligned}
			&
			\sum_{|\alpha| = m_1}
			\partial^\alpha
			a_\alpha (x, u_1)
			\ge
			f_1 (u_2)
			&
			\mbox{in } \mathbb R^n,
			\\
			&
			\sum_{|\alpha| = m_2}
			\partial^\alpha
			b_\alpha (x, u_2)
			\ge
			f_2 (u_1)
			&
			\mbox{in } \mathbb R^n,
		\end{aligned}
	\right.
	\label{1.1}
\end{equation}
where $n, m_1, m_2 \ge 1$ are integers and $a_\alpha$ and $b_\alpha$ are Caratheodory functions such that
$$
	|a_\alpha (x, \zeta)| + |b_\alpha (x, \zeta)| \le A |\zeta|
$$
with some constant $A > 0$ for almost all $x \in {\mathbb R}^n$ and for all $\zeta \in {\mathbb R}$.
In so doing, the functions $f_1, f_2 : [0, \infty) \to [0, \infty)$ on the right in~\eqref{1.1} are assumed to be non-decreasing and convex on the interval $[0, \infty)$.
As is customary, by $\alpha = {(\alpha_1, \ldots, \alpha_n)}$ we mean a multi-index with
$|\alpha| = \alpha_1 + \ldots + \alpha_n$ 
and
$
	\partial^\alpha 
	= 
	{\partial^{|\alpha|} / (\partial_{x_1}^{\alpha_1} \ldots \partial_{x_n}^{\alpha_n})}.
$
Let us also denote by $B_r$ the open ball of radius $r > 0$ centered at zero.

We say that $u_1, u_2 \in L_{1, loc} (\mathbb R^n)$ are solutions of~\eqref{1.1} if $f_1 (u_2), f_2 (u_1) \in L_{1, loc} (\mathbb R^n)$ and, moreover,
\begin{equation}
	(-1)^{m_1}
	\int_{\mathbb R^n}
	\sum_{|\alpha| = m_1}
	a_\alpha (x, u_1)
	\partial^\alpha
	\varphi
	\,
	dx
	\ge
	\int_{\mathbb R^n}
	f_1 (u_2)
	\varphi
	\,
	dx
	\label{1.2}
\end{equation}
and
\begin{equation}
	(-1)^{m_2}
	\int_{\mathbb R^n}
	\sum_{|\alpha| = m_2}
	b_\alpha (x, u_2)
	\partial^\alpha
	\varphi
	\,
	dx
	\ge
	\int_{\mathbb R^n}
	f_2 (u_1)
	\varphi
	\,
	dx
	\label{1.3}
\end{equation}
for any non-negative function $\varphi \in C_0^\infty (\mathbb R^n)$.

The absence of non-trivial solutions of differential inequalities, which is also known as the blow-up phenomenon, has traditionally attracted the attention of many mathematicians~[1--12].
However, most studies were limited to the case of power-law nonlinearities or dealt with second-order differential operators.

In our paper, an attempt is made to study nonlinearities other than power-law ones, while differential operators of arbitrary order are considered.
We obtain exact conditions guaranteeing that all solutions of~\eqref{1.1} are trivial, i.e. these solutions are equal to zero almost everywhere in $\mathbb R^n$.
For power-law nonlinearities, our results imply blow-up conditions obtained in~\cite[Theorem~19.1]{MPbook}.

Below it is assumed
that there be real numbers $\lambda_i, \mu_i \in [1, \infty)$ and non-decreasing continuous functions $g_i : [1, \infty) \to (0, \infty)$ and $h_i : (0, 1] \to (0, \infty)$ satisfying the conditions
\begin{equation}
	f_i (\zeta) \ge \zeta^{\lambda_i} g_i (\zeta)
	\quad
	\mbox{for all } \zeta \in [1, \infty),
	\quad 
	i = 1,2,
	\label{1.4}
\end{equation}
and
\begin{equation}
	f_i (\zeta) \ge \zeta^{\mu_i} h_i (\zeta)
	\quad
	\mbox{for all } \zeta \in (0, 1],
	\quad 
	i = 1,2,
	\label{1.5}
\end{equation}
and, moreover, 
\begin{equation}
	\operatorname*{lim\,inf}\limits_{\zeta \to \infty}
	\frac{
		g_i (\zeta^\varkappa)
	}{
		g_i (\zeta)
	}
	>
	0
	\label{1.6}
\end{equation}
and
\begin{equation}
	\operatorname*{lim\,inf}\limits_{\zeta \to +0}
	\frac{
		h_i (\zeta)
	}{
		h_i (\zeta^\varkappa)
	}
	>
	0
	\label{1.7}
\end{equation}
for all $\varkappa \in (0, 1)$, $i = 1,2$.

\section{Main results}

\begin{Theorem}\label{T2.1}
Suppose that $\mu_1 \mu_2 > 1$,
\begin{equation}
	\int_1^\infty
	(
		\zeta^{\lambda_1 \lambda_2}
		g_1 (\zeta)
		g_2^{\lambda_1} (\zeta)
	)^{
		- 1 / (m_1 + \lambda_1 m_2)
	}
	\zeta^{
		1 / (m_1 + \lambda_1 m_2) - 1
	}
	d\zeta
	<
	\infty,
	\label{T2.1.1}
\end{equation}
and
\begin{equation}
	\int_0^1
	r^{(n - m_2) \mu_1 \mu_2 - m_1 \mu_2 - n - 1}
	h_1^{\mu_2} (r)
	h_2 (r)
	\,
	dr
	=
	\infty.
	\label{T2.1.2}
\end{equation}
Then all solutions of~\eqref{1.1} are trivial.
\end{Theorem}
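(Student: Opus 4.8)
The plan is to use the nonlinear capacity (test-function) method of Mitidieri and Pohozaev, iterated through the two inequalities, together with a Keller--Osserman-type analysis of the resulting recursion. \emph{Step 1.} For $0 < r < R$ I would substitute $\varphi = \psi^\sigma$ into \eqref{1.2} and \eqref{1.3}, where $\psi \in C_0^\infty(B_R)$, $\psi \equiv 1$ on $B_r$, $0 \le \psi \le 1$, $|\partial^\beta \psi| \le C_\beta (R - r)^{-|\beta|}$, and $\sigma$ is a large integer (so that $\partial^\alpha \psi^\sigma$ never carries a negative power of $\psi$ and no boundary terms survive). Using $|a_\alpha(x,\zeta)|, |b_\alpha(x,\zeta)| \le A|\zeta|$ and $|\partial^\alpha \psi^\sigma| \le C(R - r)^{-m_i}\psi^{\sigma - m_i}$ for $|\alpha| = m_i$, one obtains
\[
	\int_{B_r} f_1(u_2)\,dx \le \frac{C}{(R - r)^{m_1}}\int_{B_R} |u_1|\,dx ,
	\qquad
	\int_{B_r} f_2(u_1)\,dx \le \frac{C}{(R - r)^{m_2}}\int_{B_R} |u_2|\,dx .
\]
Since $f_1(u_2)$ and $f_2(u_1)$ are defined, $u_1, u_2 \ge 0$ a.e.; the convexity of $f_i$ and Jensen's inequality then convert these into estimates for the averages $v_i(r) := |B_r|^{-1}\int_{B_r} u_i\,dx$ of the schematic form $f_1(v_2(r)) \le C r^{-m_1} v_1(2r)$, $f_2(v_1(r)) \le C r^{-m_2} v_2(2r)$, and, kept in integral form, for $J_i(r) := \int_{B_r} f_i(u_{3-i})\,dx$.

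\emph{Step 2.} Now I would insert the lower bounds on the nonlinearities and compose the two estimates. On the range where the relevant quantity is $\ge 1$, \eqref{1.4} and the monotonicity of $g_i$ give a recursion of the form $v_1(4r) \ge c\, r^{m_1 + \lambda_1 m_2}\, v_1(r)^{\lambda_1 \lambda_2}\, g_2(v_1(r))^{\lambda_1}\, g_1(v_2(2r))$; comparing it, in the variable $\log r$, with the associated ordinary differential inequality -- whose Keller--Osserman integral is exactly \eqref{T2.1.1} -- the convergence in \eqref{T2.1.1} forces $v_1$ to become infinite at a finite radius unless $v_1$ remains bounded, which contradicts $u_1 \in L_{1,loc}(\mathbb R^n)$. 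Thus it remains to treat the complementary case, where the ``small'' part dominates; there one uses \eqref{1.5} on $\{u_i < 1\}$ together with Hölder's inequality to get the recursion $J_1(r) \le C\, r^{\theta} J_1(4r)^{1/(\mu_1 \mu_2)}$, which is contractive precisely because $\mu_1 \mu_2 > 1$; telescoping yields $J_1(r) \le C\, r^{\theta \mu_1 \mu_2 / (\mu_1 \mu_2 - 1)}$, and the exponent is arranged so that the right-hand side tends to $0$ exactly when the integral in \eqref{T2.1.2} diverges (the borderline case being recovered through the factors $h_i$). Since $J_1$ is non-decreasing, this forces $J_1 \equiv 0$, hence $u_2 = 0$ a.e. by \eqref{1.5}; the second displayed inequality then gives $f_2(u_1) = 0$ a.e., so that $u_1 = 0$ a.e. as well, and all solutions are trivial.

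I expect the main obstacle to be carrying the iteration through while retaining the \emph{precise} dependence on $g_i$ and $h_i$ -- so that the sharp integrals \eqref{T2.1.1} and \eqref{T2.1.2} emerge rather than crude powers of $r$ -- and splitting every integral consistently into its ``$\ge 1$'' and ``$< 1$'' portions, so that the Keller--Osserman alternative and the subcritical alternative are genuinely exhaustive. This is exactly where \eqref{1.6} and \eqref{1.7} enter: composing the inequalities repeatedly raises the unknowns to fractional powers $\varkappa \in (0,1)$, and these hypotheses permit replacing $g_i(\zeta^\varkappa)$ and $h_i(\zeta^\varkappa)$ by $g_i(\zeta)$ and $h_i(\zeta)$ up to multiplicative constants, so that the nonlinear gain is preserved through the iteration. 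A secondary difficulty, present only when $m_1$ or $m_2$ exceeds $2$, is that the classical spherical-average ordinary differential inequality is no longer available, so the Keller--Osserman step has to be run directly at the level of the iterated integral estimates.
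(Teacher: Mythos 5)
Your overall architecture does mirror the paper's: test functions plus Jensen to get the annulus estimates, a Keller--Osserman iteration in the ``large'' regime driven by \eqref{1.4} and \eqref{T2.1.1}, and a second stage in the ``small'' regime driven by \eqref{1.5}, $\mu_1\mu_2>1$ and \eqref{T2.1.2}. But there are two genuine gaps at exactly the points where the sharpness of the theorem lives. First, the output of your first stage is too weak. You conclude only that $v_1$ ``remains bounded''; what the argument actually needs (and what the paper proves in Lemma~\ref{L3.4}) is that $R^{-n}\int_{B_R}|u_1|\,dx\to 0$, and hence, via the test-function estimates, that $J_1(R)/R^{n-m_2}\to 0$ (see \eqref{PL3.7.12}). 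This decay is indispensable: the bounds \eqref{1.5} with the functions $h_i$ are only available for arguments in $(0,1]$, so before you can ``use \eqref{1.5} on the small part'' you must know that the quantities $(r_2-r_1)^{m_i}J_i(r_1)/(r_2^n-r_1^n)$ are eventually below $1$ for all large radii; boundedness of the averages does not give this, and your dichotomy (``blow-up at finite radius unless bounded'') is not the statement proved or needed. Relatedly, there is no genuine case split between a regime where the large part dominates and one where the small part dominates: for any nontrivial solution, \eqref{T2.1.1} is used first to force decay, and then the small-argument analysis applies for all large $R$.

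Second, your mechanism for exploiting \eqref{T2.1.2} cannot produce the sharp integral criterion. You propose a contractive recursion $J_1(r)\le C r^{\theta}J_1(4r)^{1/(\mu_1\mu_2)}$, telescoped to a pure power bound $J_1(r)\le Cr^{\theta\mu_1\mu_2/(\mu_1\mu_2-1)}$, and claim the right-hand side ``tends to $0$ exactly when \eqref{T2.1.2} diverges, the borderline case being recovered through the factors $h_i$.'' That last claim is precisely what is missing and is false as stated: \eqref{T2.1.2} is an integral condition whose critical cases (e.g.\ $(n-m_2)\mu_1\mu_2-m_1\mu_2-n=0$, where the divergence is only logarithmic, or cases where the power part converges but the slowly varying factors $h_1^{\mu_2}h_2$ restore divergence) give at best $J_1(r)\le C$, not $J_1(r)\to 0$, under any pure power bound, so non-triviality is not contradicted. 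The paper's device is different and is the crux: from the small-argument estimates one derives the discrete differential inequality \eqref{L3.7.1} for $J_1^{1-\mu_1\mu_2}$ (Lemma~\ref{L3.7}, itself requiring the annulus decomposition into the index sets $\Xi_1,\Xi_2$ to control steps with tiny gaps), and then one sums it over dyadic radii $R_i=2^iR_0$: the left side telescopes to the finite quantity $J_1^{1-\mu_1\mu_2}(R_0)$, while the right side is comparable to $\int_{R_0}^{\infty}R^{\,n+m_1\mu_2-(n-m_2)\mu_1\mu_2-1}h_1^{\mu_2}(1/R)h_2(1/R)\,dR$, which diverges by \eqref{T2.1.2}. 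Without this telescoping (or an equivalent comparison that keeps the $h_i$ inside the integral), your outline proves the theorem only under a strict power inequality, not under \eqref{T2.1.2}. A minor further point: your assertion that $u_1,u_2\ge 0$ a.e.\ is neither justified nor needed; the paper works throughout with $|u_i|$, using only $|a_\alpha(x,\zeta)|+|b_\alpha(x,\zeta)|\le A|\zeta|$.
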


\begin{Corollary}\label{C2.1}
Let $\lambda_1 \lambda_2 > 1$ and, moreover,~\eqref{T2.1.2} be satisfied.
Then all solutions of~\eqref{1.1} are trivial.
\end{Corollary}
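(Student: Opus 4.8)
The plan is to deduce Corollary~\ref{C2.1} from Theorem~\ref{T2.1} by producing admissible data $(\lambda_i,\mu_i,g_i,h_i)$ — possibly different from the given ones — for which \emph{both} of the extra hypotheses of the theorem, namely $\mu_1\mu_2>1$ and the convergence condition~\eqref{T2.1.1}, are in force.

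First I would observe that~\eqref{T2.1.1} is automatic as soon as $\lambda_1\lambda_2>1$. Since $g_1,g_2$ are non-decreasing, $g_i(\zeta)\ge g_i(1)>0$ for $\zeta\ge1$, so the integrand in~\eqref{T2.1.1} equals
$$
	\bigl(g_1(\zeta)\,g_2^{\lambda_1}(\zeta)\bigr)^{-1/(m_1+\lambda_1 m_2)}
	\,\zeta^{(1-\lambda_1\lambda_2)/(m_1+\lambda_1 m_2)-1}
	\le
	C\,\zeta^{(1-\lambda_1\lambda_2)/(m_1+\lambda_1 m_2)-1},
$$
with $C=\bigl(g_1(1)\,g_2^{\lambda_1}(1)\bigr)^{-1/(m_1+\lambda_1 m_2)}$; as $m_1+\lambda_1 m_2>0$ and $\lambda_1\lambda_2>1$, the exponent is strictly less than $-1$ and the integral converges.

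Next I would secure $\mu_1\mu_2>1$. If it already holds, Theorem~\ref{T2.1} applies and we are done. Otherwise $\mu_1=\mu_2=1$ (recall $\mu_i\ge1$), and I would leave $\lambda_i,g_i,\mu_2,h_2,h_1$ as they are and replace $\mu_1$ by $\mu_1'=1+\varepsilon$ for a small $\varepsilon>0$. On $(0,1]$ one has $\zeta^{1+\varepsilon}h_1(\zeta)\le\zeta\,h_1(\zeta)\le f_1(\zeta)$, so~\eqref{1.5} persists with the new exponent and the same $h_1$, while~\eqref{1.4}, \eqref{1.6}, \eqref{1.7} are untouched and $\mu_1'\mu_2=1+\varepsilon>1$. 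It remains to pick $\varepsilon$ so that~\eqref{T2.1.2} survives the change: for the new data its power of $r$ becomes $-(m_1+m_2+1)+(n-m_2)\varepsilon$, which is $<-1$ for every $\varepsilon>0$ when $n\le m_2$ and for every $\varepsilon\in\bigl(0,(m_1+m_2)/(n-m_2)\bigr)$ when $n>m_2$. Because a kernel $r^{\sigma}$ with $\sigma<-1$ yields a divergent integral at $r=0$ even after multiplication by $h_1(r)h_2(r)$ — these being positive, non-decreasing, and, by~\eqref{1.7}, bounded below near $0$ by a negative power of $\log(1/r)$ — the relation~\eqref{T2.1.2} holds for the new data, and Theorem~\ref{T2.1} gives the conclusion.

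The step I expect to be the main obstacle is this last verification, that the innocuous-looking perturbation $\mu_1\mapsto1+\varepsilon$ does not spoil~\eqref{T2.1.2}. Two points must be recorded with care: that the unperturbed power $-(m_1+m_2+1)$ lies strictly below the critical value $-1$, so a small perturbation stays on the divergent side; and that~\eqref{1.7} genuinely prevents $h_1h_2$ from decaying fast enough to make $\int_0^1 r^{\sigma}h_1(r)h_2(r)\,dr$ converge for $\sigma<-1$ — the cleanest way being to iterate~\eqref{1.7} to get $h_i(r)\ge c\,(\log(1/r))^{-\gamma_i}$ for small $r$ and then substitute $t=\log(1/r)$.
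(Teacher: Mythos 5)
Your argument is correct, and its core is the same reduction to Theorem~\ref{T2.1} that the paper uses, but it is more careful than the paper's own two-line proof on one point. For~\eqref{T2.1.1} the paper invokes Lemma~\ref{L3.3} to get $\zeta^{\lambda_1\lambda_2}g_1(\zeta)g_2^{\lambda_1}(\zeta)\ge\zeta^{\lambda}$ with some $\lambda>1$ near infinity, whereas you only use $g_i(\zeta)\ge g_i(1)>0$; both give the same conclusion, and your bound is if anything more elementary, since $g_1g_2^{\lambda_1}$ enters~\eqref{T2.1.1} with a negative exponent. The genuine difference is the boundary case $\mu_1\mu_2=1$ (i.e.\ $\mu_1=\mu_2=1$): Corollary~\ref{C2.1} does not assume $\mu_1\mu_2>1$, yet the paper's proof simply says ``use Theorem~\ref{T2.1}'', which formally requires that inequality; within the paper's framework the natural patch is to fall back on Theorem~\ref{T2.3}, whose hypotheses ($\mu_1=\mu_2=1$ and~\eqref{T2.1.1}) are then satisfied. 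You instead close this case by perturbing the admissible data, replacing $\mu_1$ by $1+\varepsilon$ with the same $h_1$ --- legitimate, since $\zeta^{1+\varepsilon}h_1(\zeta)\le\zeta h_1(\zeta)\le f_1(\zeta)$ on $(0,1]$ and \eqref{1.4}, \eqref{1.6}, \eqref{1.7} are untouched --- and checking that the new exponent in~\eqref{T2.1.2}, namely $-(m_1+m_2+1)+(n-m_2)\varepsilon$, stays below $-1$ for small $\varepsilon>0$, so that divergence follows from the logarithmic lower bound $h_i(r)\ge C\log^{-\sigma}(1/r)$ near $r=0$; note that this lower bound is exactly the second estimate of Lemma~\ref{L3.3}, so you can cite it rather than re-iterate~\eqref{1.7}. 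What your route buys is a proof of the corollary from Theorem~\ref{T2.1} alone, valid for all admissible $\mu_i$; the paper's route is shorter but tacitly relies on Theorem~\ref{T2.3} for the case $\mu_1=\mu_2=1$.
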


In the case of $m_1 \ge n$, condition~\eqref{T2.1.2} in Theorem~\ref{T2.1} and Corollary~\ref{C2.1} can be dropped. Namely, the following statements are valid.

\begin{Theorem}\label{T2.2}
Suppose that $m_1 \ge n$ and~\eqref{T2.1.1} holds.
Then all solutions of~\eqref{1.1} are trivial.
\end{Theorem}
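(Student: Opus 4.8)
The proof follows the nonlinear capacity (test function) method, and is in fact the argument of Theorem~\ref{T2.1} together with the observation that the hypothesis $m_1\ge n$ makes condition~\eqref{T2.1.2} redundant. First I would reduce to the case of non-trivial $u_1,u_2$. From $|a_\alpha(x,\zeta)|+|b_\alpha(x,\zeta)|\le A|\zeta|$ one gets $a_\alpha(x,0)=b_\alpha(x,0)=0$ for a.e.\ $x$; hence if $u_1\equiv 0$ then~\eqref{1.2} yields $\int f_1(u_2)\varphi\,dx\le 0$ for every non-negative $\varphi\in C_0^\infty(\mathbb R^n)$, so $f_1(u_2)=0$ a.e., and since by~\eqref{1.5} and monotonicity $f_i>0$ on $(0,\infty)$, this forces $u_2\equiv 0$; symmetrically $u_2\equiv 0$ implies $u_1\equiv 0$. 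So assume $u_1\not\equiv 0$ and $u_2\not\equiv 0$; then $c_0:=\int_{B_{R_0}}f_1(u_2)\,dx>0$ for some $R_0$, and $\int_{B_R}f_1(u_2)\,dx\ge c_0$ for all $R\ge R_0$.

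\textbf{The capacity estimate.} With $\varphi=\varphi_0^s$, $s$ large, where $0\le\varphi_0\le 1$, $\varphi_0\equiv 1$ on $B_R$, $\supp\varphi_0\subset B_{2R}$ and $|\partial^\alpha\varphi_0|\le CR^{-|\alpha|}$, substitution into~\eqref{1.2},~\eqref{1.3} together with the bound on $a_\alpha,b_\alpha$ gives
$$
\int f_1(u_2)\varphi_0^s\,dx\le CR^{-m_1}\int_{B_{2R}}|u_1|\,dx,
\qquad
\int f_2(u_1)\varphi_0^s\,dx\le CR^{-m_2}\int_{B_{2R}}|u_2|\,dx.
$$
On $\{u_1\ge 1\}$ I bound $|u_1|\le(f_2(u_1)/g_2(u_1))^{1/\lambda_2}$ and on $\{u_2\ge 1\}$, $|u_2|\le(f_1(u_2)/g_1(u_2))^{1/\lambda_1}$ by~\eqref{1.4}, and on $\{u_i<1\}$ by $|u_i|<1$. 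Hölder's inequality with the exponents $\lambda_1,\lambda_2$ and substitution of each inequality of~\eqref{1.1} into the other then produce, after an absorption step, an estimate of the form
$$
\int_{B_R}f_1(u_2)\,dx\le C\Bigl(\int_{B_{cR}}f_1(u_2)\,dx\Bigr)^{1/(\lambda_1\lambda_2)}\Xi(R)+C\bigl(R^{n-m_1}+R^{\,n-m_1-m_2/\lambda_2}\bigr),
$$
with $\Xi(R)$ an explicit expression in $R$ carrying the slowly varying factors $g_1,g_2$; the two last terms arise from the sets $\{u_1<1\}$ and $\{u_2<1\}$, and it is exactly these that in Theorem~\ref{T2.1} are controlled, for small values of the solution, by means of~\eqref{1.5} and~\eqref{T2.1.2}.

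\textbf{Where $m_1\ge n$ is used, and the conclusion.} Since $m_1\ge n$, both exponents $n-m_1$ and $n-m_1-m_2/\lambda_2$ are $\le 0$, so these remainders are bounded (and tend to $0$ when $m_1>n$): the ``small solution'' part of the argument simply does not occur and~\eqref{T2.1.2} is not needed. Moreover, from $\int_{B_R}f_1(u_2)\,dx\ge c_0$ and the first capacity inequality one gets $\int_{B_{2R}}|u_1|\,dx\ge cR^{m_1}$, so the mean value of $|u_1|$ over $B_{2R}$ is $\ge cR^{m_1-n}\ge c''>0$ for $R\ge R_0$; thus a non-trivial solution never enters the small-value regime. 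It then remains only to exploit~\eqref{T2.1.1}: substituting the lower bounds~\eqref{1.4} into one another replaces $f_1,f_2$ by the composed nonlinearity $\zeta\mapsto\zeta^{\lambda_1\lambda_2}g_1(\zeta)g_2^{\lambda_1}(\zeta)$ of ``order'' $m_1+\lambda_1 m_2$, and, using the slow variation~\eqref{1.6} and the convergence of the integral in~\eqref{T2.1.1}, the displayed estimate is bootstrapped to give $\int_{B_R}f_1(u_2)\,dx\to 0$ as $R\to\infty$, contradicting $\int_{B_R}f_1(u_2)\,dx\ge c_0>0$. Hence $u_2\equiv 0$, and then $u_1\equiv 0$.

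\textbf{Main obstacle.} I expect the crux to be this last step: carrying the slowly varying $g_1,g_2$ through the repeated applications of Hölder's and Young's inequalities and verifying that the resulting recursion for $\int_{B_R}f_1(u_2)\,dx$ terminates precisely under condition~\eqref{T2.1.1}, particularly in the borderline case $\lambda_1\lambda_2=1$, where no bare power is available for absorption and one must invoke~\eqref{1.6}. Everything else — the choice of $s$, the bookkeeping with the cut-off exponents, and the (now harmless) remainders of order $R^{n-m_1}$ — is routine.
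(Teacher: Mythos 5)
Your overall architecture does match the paper's: the paper proves Theorem~\ref{T2.2} by taking $r_1=R/2$, $r_2=R$ in Lemma~\ref{L3.1}, using $m_1\ge n$ to replace $R^{-m_1}$ by $R^{-n}$, so that for a non-trivial solution the mean value $R^{-n}\int_{B_R}|u_1|\,dx$ is bounded below by $C\int_{B_{R/2}}f_1(|u_2|)\,dx\ge c_0>0$, and then invoking the decay statement (Lemma~\ref{L3.4}): under \eqref{T2.1.1} this mean tends to $0$, a contradiction. Your observations that $m_1\ge n$ removes the small-value regime and that the mean of $|u_1|$ stays bounded below are exactly this part of the argument and are fine.

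The genuine gap is the step you yourself flag as the ``main obstacle'' and leave unproved: deducing from \eqref{T2.1.1} that $\int_{B_R}f_1(u_2)\,dx$ (equivalently, $R^{-n}\int_{B_R}|u_1|\,dx$) must decay. Your single-scale capacity estimate
$\int_{B_R}f_1(u_2)\le C\bigl(\int_{B_{cR}}f_1(u_2)\bigr)^{1/(\lambda_1\lambda_2)}\Xi(R)+\dots$
cannot be ``bootstrapped'' into \eqref{T2.1.1}. If $\Xi(R)$ is a genuine function of $R$ alone, the absorption argument only yields power-type conditions of the form $\lambda_1\lambda_2>1$ plus an exponent inequality, and it collapses entirely when $\lambda_1\lambda_2=1$, where the exponent $1/(\lambda_1\lambda_2)=1$ gives no absorption; but \eqref{T2.1.1} can hold with $\lambda_1=\lambda_2=1$ purely through the slowly varying factors $g_1,g_2$ (Corollary~\ref{C2.4}, Example~\ref{E2.2} with $\nu_1+\nu_2>m_1+m_2$), and Lemma~\ref{L3.3} shows the $g_i$ grow at most logarithmically, so \eqref{1.6} alone supplies no power gain either. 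Moreover, to make $\Xi(R)$ ``carry'' $g_1,g_2$ you must evaluate them at quantities depending on the unknown size of the solution at scale $R$, which turns your displayed inequality into a nonlinear recursion in $R$; handling that recursion is precisely the content of the paper's Lemma~\ref{L3.4}: one constructs radii $R=r_0<r_1<\dots<r_l=2R$ at which $I_1$ doubles, derives the difference inequality \eqref{PL3.4.1} by composing the two inequalities of Lemma~\ref{L3.2} at intermediate radii, splits the indices into short and long intervals ($\Xi_1$, $\Xi_2$), and sums the long ones to compare $CR$ with the Keller--Osserman integral in \eqref{T2.1.1}, reaching a contradiction with its convergence. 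None of this machinery (or an equivalent ODE-comparison argument over dyadic shells) appears in your sketch, and without it the key implication ``\eqref{T2.1.1} $\Rightarrow$ decay of the mean of $|u_1|$'' — which is the actual substance of Theorem~\ref{T2.2} beyond the elementary $m_1\ge n$ observation — is not established.
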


\begin{Corollary}\label{C2.2}
Let $m_1 \ge n$ and $\lambda_1 \lambda_2 > 1$.
Then all solutions of~\eqref{1.1} are trivial.
\end{Corollary}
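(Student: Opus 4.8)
The plan is to obtain Corollary~\ref{C2.2} as a direct consequence of Theorem~\ref{T2.2}. Since the hypothesis $m_1 \ge n$ is common to both, it suffices to verify that the single inequality $\lambda_1 \lambda_2 > 1$ already forces the integrability condition~\eqref{T2.1.1}; Theorem~\ref{T2.2} then yields the conclusion with nothing further to prove.

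To carry this out I would first use that $g_1$ and $g_2$ are non-decreasing and take values in $(0, \infty)$, so that $g_i (\zeta) \ge g_i (1) > 0$ for all $\zeta \in [1, \infty)$ and $i = 1, 2$. As the exponent $1/(m_1 + \lambda_1 m_2)$ is positive, this gives
\[
	\bigl(
		\zeta^{\lambda_1 \lambda_2}
		g_1 (\zeta)
		g_2^{\lambda_1} (\zeta)
	\bigr)^{- 1 / (m_1 + \lambda_1 m_2)}
	\zeta^{1 / (m_1 + \lambda_1 m_2) - 1}
	\le
	C \,
	\zeta^{(1 - \lambda_1 \lambda_2) / (m_1 + \lambda_1 m_2) - 1}
\]
for all $\zeta \ge 1$, with the positive constant $C = ( g_1 (1) \, g_2^{\lambda_1} (1) )^{- 1 / (m_1 + \lambda_1 m_2)}$. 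The power of $\zeta$ on the right is strictly less than $-1$ if and only if $(1 - \lambda_1 \lambda_2)/(m_1 + \lambda_1 m_2) < 0$, that is, since $m_1 + \lambda_1 m_2 > 0$, if and only if $\lambda_1 \lambda_2 > 1$. Hence the integral in~\eqref{T2.1.1} is dominated by a convergent one, so~\eqref{T2.1.1} holds, and Theorem~\ref{T2.2} shows that every solution of~\eqref{1.1} is trivial.

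I do not anticipate a real obstacle: the statement is an elementary reduction to Theorem~\ref{T2.2}. The only points meriting care are that the uniform lower bound $g_i \ge g_i (1) > 0$ uses nothing beyond the monotonicity and positivity of $g_i$ already assumed in the paper, and that, once the factor involving $g_1$ and $g_2$ has been absorbed into the constant $C$, the remaining power of $\zeta$ is integrable at infinity exactly under the hypothesis $\lambda_1 \lambda_2 > 1$ of the corollary. It is the assumption $m_1 \ge n$, inherited from Theorem~\ref{T2.2}, that makes it unnecessary to impose condition~\eqref{T2.1.2} or any restriction on $\mu_1 \mu_2$ here.
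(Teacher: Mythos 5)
Your proof is correct and follows essentially the paper's own route: deduce condition~\eqref{T2.1.1} from $\lambda_1 \lambda_2 > 1$ and then apply Theorem~\ref{T2.2}. The only (harmless) difference is in how~\eqref{T2.1.1} is verified: the paper cites Lemma~\ref{L3.3} to obtain $\zeta^{\lambda_1 \lambda_2} g_1 (\zeta) g_2^{\lambda_1} (\zeta) \ge \zeta^{\lambda}$ with some $\lambda > 1$ near infinity, whereas you use only the positivity and monotonicity of $g_1, g_2$ to bound the integrand by $C \zeta^{(1 - \lambda_1 \lambda_2)/(m_1 + \lambda_1 m_2) - 1}$, which indeed suffices.
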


\begin{Theorem}\label{T2.3}
Suppose that $\mu_1 = \mu_2 = 1$ and~\eqref{T2.1.1} holds. 
Then all solutions of~\eqref{1.1} are trivial.
\end{Theorem}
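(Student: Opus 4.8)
The plan is to derive Theorem~\ref{T2.3} from Theorem~\ref{T2.1} by perturbing the exponents $\mu_1 = \mu_2 = 1$ slightly upward. The only feature of Theorem~\ref{T2.1} that does not apply verbatim here is the strict inequality $\mu_1\mu_2 > 1$; on the other hand, when $\mu_1 = \mu_2 = 1$ the exponent of $r$ in~\eqref{T2.1.2} equals $(n - m_2) - m_1 - n - 1 = -m_1 - m_2 - 1 \le -3$, so~\eqref{T2.1.2} is essentially automatic. I would therefore keep $\lambda_i$ and $g_i$ as they are, replace each $\mu_i = 1$ by $1 + \varepsilon$ with $\varepsilon > 0$ small, and replace each $h_i$ by an appropriate positive constant; since the auxiliary functions $\mu_i, h_i$ do not enter the system~\eqref{1.1} itself, the notion of a solution of~\eqref{1.1} is unaffected by such a substitution, and it suffices to exhibit \emph{one} admissible choice of auxiliary data satisfying the hypotheses of Theorem~\ref{T2.1}.

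The first step is to extract from~\eqref{1.7} the decay bound that makes the substitution work. Fixing $\varkappa \in (0,1)$ and iterating the inequality $h_i(\zeta) \ge c\, h_i(\zeta^\varkappa)$ (valid, with some $c = c(\varkappa) > 0$, for all sufficiently small $\zeta$) yields $h_i(\zeta) \ge C\,(\log(1/\zeta))^{-\beta}$ near $\zeta = 0$ with some $C, \beta > 0$. In particular, for every $\varepsilon > 0$ the function $\zeta \mapsto \zeta^{-\varepsilon} h_i(\zeta)$ tends to $+\infty$ as $\zeta \to +0$, and being continuous and positive on $(0,1]$ it satisfies
\[
	c_i := \inf_{0 < \zeta \le 1} \zeta^{-\varepsilon} h_i(\zeta) > 0 .
\]
I would then fix $\varepsilon > 0$ so small that $(n - m_2)(1 + \varepsilon)^2 - m_1(1 + \varepsilon) - n - 1 \le -1$ — possible since at $\varepsilon = 0$ this equals $-m_1 - m_2 - 1 \le -3$ and it depends continuously on $\varepsilon$ — and set $\tilde\mu_i := 1 + \varepsilon$ and $\tilde h_i \equiv c_i$.

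It then remains to check that $\lambda_i, g_i, \tilde\mu_i, \tilde h_i$ satisfy every hypothesis of Theorem~\ref{T2.1}. Conditions~\eqref{1.4},~\eqref{1.6} and~\eqref{T2.1.1} involve only $\lambda_i, g_i$ and hence hold by assumption;~\eqref{1.7} is trivial for the constants $\tilde h_i$; and for $0 < \zeta \le 1$ we have
\[
	f_i(\zeta) \ge \zeta\, h_i(\zeta) = \zeta^{\tilde\mu_i}\bigl(\zeta^{-\varepsilon} h_i(\zeta)\bigr) \ge c_i\, \zeta^{\tilde\mu_i} = \zeta^{\tilde\mu_i}\tilde h_i(\zeta),
\]
which is~\eqref{1.5}. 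Finally $\tilde\mu_1\tilde\mu_2 = (1 + \varepsilon)^2 > 1$, and by the choice of $\varepsilon$
\[
	\int_0^1 r^{(n - m_2)\tilde\mu_1\tilde\mu_2 - m_1\tilde\mu_2 - n - 1}\, \tilde h_1^{\tilde\mu_2}(r)\, \tilde h_2(r)\, dr = c_1^{\tilde\mu_2} c_2 \int_0^1 r^{(n - m_2)(1 + \varepsilon)^2 - m_1(1 + \varepsilon) - n - 1}\, dr = \infty ,
\]
so~\eqref{T2.1.2} holds as well. Theorem~\ref{T2.1} then applies and gives that all solutions of~\eqref{1.1} are trivial.

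The one genuinely technical ingredient is the logarithmic decay bound on $h_i$ coming from~\eqref{1.7}: without it the infima $c_i$ could be zero and the reduction would collapse, so I expect this to be the main point needing care; everything else is routine verification. A self-contained alternative would be to re-run the nonlinear-capacity argument behind Theorem~\ref{T2.1}, using that for $\mu_1 = \mu_2 = 1$ the scaling exponent governed by~\eqref{T2.1.2} is automatically $\le -3$, but I would still prefer the reduction above, since it reuses that argument rather than reproducing it.
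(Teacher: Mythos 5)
Your proposal is correct, but it takes a genuinely different route from the paper. You reduce Theorem~\ref{T2.3} to Theorem~\ref{T2.1} by perturbing the auxiliary data: $\mu_i=1$ becomes $\tilde\mu_i=1+\varepsilon$ and $h_i$ becomes the constant $c_i=\inf_{0<\zeta\le1}\zeta^{-\varepsilon}h_i(\zeta)$, which is positive precisely because~\eqref{1.7} forces the logarithmic lower bound $h_i(\zeta)\ge C\log^{-\sigma}(1/\zeta)$ near zero (this is exactly the second inequality of the paper's Lemma~\ref{L3.3}, so your ``technical ingredient'' is already available in the paper). The reduction is legitimate since the notion of solution of~\eqref{1.1} does not involve $\mu_i,h_i$, the constants are admissible choices of $h_i$ (positive, continuous, non-decreasing, trivially satisfying~\eqref{1.7}), condition~\eqref{1.5} with exponent $1+\varepsilon$ follows from your displayed computation, and the exponent in~\eqref{T2.1.2} is $-(m_1+m_2+1)\le-3$ at $\varepsilon=0$, so it stays below $-1$ for small $\varepsilon$ and the integral diverges. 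The paper instead proves Theorem~\ref{T2.3} directly: it establishes Lemma~\ref{L3.8}, an analogue of Lemma~\ref{L3.7} in which the missing gap $\mu_1\mu_2-1$ is replaced by an artificial parameter $\delta>0$ inserted via the decay relation~\eqref{PL3.7.12} (writing $J_1(r_i)/r_i^{\,n-m_2}\ge(J_1(r_i)/r_i^{\,n-m_2})^{1+\delta}$, at the cost of a constant degrading like $\delta^{m_1+m_2+1}$), and then runs the dyadic summation over $R_i=2^iR_0$, using Lemma~\ref{L3.3} to force divergence of the right-hand side. In substance both arguments exploit the same trick — trading the borderline exponent $\mu_1\mu_2=1$ for $1+\varepsilon$ at the price of a logarithm controlled by~\eqref{1.7} — but you apply it at the level of the hypotheses of Theorem~\ref{T2.1} (black-box reuse, shorter and cleaner), whereas the paper applies it inside the iteration machinery, which yields the explicit quantitative estimate~\eqref{L3.8.1} with its $\delta$-dependence; for the purely qualitative triviality statement that extra control is not needed, so your reduction is a perfectly adequate and arguably more economical proof.
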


\begin{Remark}\label{R2.1}
Nothing obviously prevents us from interchanging the inequalities and unknown functions in~\eqref{1.1}. Thus, Theorems~\ref{T2.1} and~\ref{T2.3} and Corollary~\ref{C2.1} remain valid with conditions~\eqref{T2.1.1} and~\eqref{T2.1.2} replaced by
\begin{equation}
	\int_1^\infty
	(
		\zeta^{\lambda_2 \lambda_1}
		g_2 (\zeta)
		g_1^{\lambda_2} (\zeta)
	)^{
		- 1 / (m_2 + \lambda_2 m_1)
	}
	\zeta^{
		1 / (m_2 + \lambda_2 m_1) - 1
	}
	d\zeta
	<
	\infty
	\label{R2.1.1}
\end{equation}
and
\begin{equation}
	\int_0^1
	r^{(n - m_1) \mu_2 \mu_1 - m_2 \mu_1 - n - 1}
	h_2^{\mu_1} (r)
	h_1 (r)
	\,
	dr
	=
	\infty,
	\label{R2.1.2}
\end{equation}
respectively. 
In its turn, Theorem~\ref{T2.2} and Corollary~\ref{C2.2} remain valid with the inequality $m_1 \ge n$ replaced by $m_2 \ge n$ and condition~\eqref{T2.1.1} replaced by~\eqref{R2.1.2}.
\end{Remark}

\begin{Corollary}\label{C2.3}
Let $\lambda_1 \lambda_2 > 1$ and $\mu_1 = \mu_2 = 1$. 
Then all solutions of~\eqref{1.1} are trivial.
\end{Corollary}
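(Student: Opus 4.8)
The plan is to deduce Corollary~\ref{C2.3} from Theorem~\ref{T2.3} by exploiting the freedom in the choice of the functions $g_1, g_2$. Since $g_1$ and $g_2$ are, by the standing hypotheses, continuous, positive and non-decreasing on $[1,\infty)$, one has $g_i(\zeta) \ge g_i(1) =: c_i > 0$ for every $\zeta \ge 1$. Hence, replacing $g_i$ by the constant function $\tilde g_i \equiv c_i$, we still have $f_i(\zeta) \ge \zeta^{\lambda_i} g_i(\zeta) \ge \zeta^{\lambda_i} \tilde g_i(\zeta)$ for all $\zeta \in [1,\infty)$, so that~\eqref{1.4} remains in force with $g_i$ replaced by $\tilde g_i$; conditions~\eqref{1.5} and~\eqref{1.7} are untouched (they involve only $h_i$ and $\mu_i$), while~\eqref{1.6} holds trivially for a constant function, since $\tilde g_i(\zeta^\varkappa)/\tilde g_i(\zeta) \equiv 1$. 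Thus the data $(\lambda_i, \mu_i, \tilde g_i, h_i)$ satisfy all standing assumptions~\eqref{1.4}--\eqref{1.7}, and $\mu_1 = \mu_2 = 1$ by hypothesis, so it suffices to verify~\eqref{T2.1.1} for this modified data and then invoke Theorem~\ref{T2.3}.

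With $g_i$ replaced by $\tilde g_i \equiv c_i$, the integrand in~\eqref{T2.1.1} becomes
$$
	\bigl( c_1 c_2^{\lambda_1} \bigr)^{- 1/(m_1 + \lambda_1 m_2)}
	\,
	\zeta^{(1 - \lambda_1 \lambda_2)/(m_1 + \lambda_1 m_2) - 1},
$$
i.e. a constant multiple of a pure power of $\zeta$. Since $m_1, m_2 \ge 1$ and $\lambda_1 \ge 1$, the denominator $m_1 + \lambda_1 m_2$ is positive, and the hypothesis $\lambda_1 \lambda_2 > 1$ gives $1 - \lambda_1 \lambda_2 < 0$; hence the exponent $(1 - \lambda_1 \lambda_2)/(m_1 + \lambda_1 m_2) - 1$ is strictly less than $-1$, and the integral in~\eqref{T2.1.1} converges. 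Applying Theorem~\ref{T2.3} to the data $(\lambda_i, \mu_i = 1, \tilde g_i, h_i)$ then yields that all solutions of~\eqref{1.1} are trivial.

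I do not expect a serious obstacle here: the argument is a routine reduction, and the only points requiring a word of care are (i) that shrinking $g_i$ to the constant $g_i(1)$ does not destroy any of the hypotheses~\eqref{1.4}--\eqref{1.7} — which is immediate — and (ii) the elementary computation showing that~\eqref{T2.1.1} reduces to a convergent power integral precisely under the assumption $\lambda_1 \lambda_2 > 1$. Alternatively, one could run the same reduction starting from Remark~\ref{R2.1} and condition~\eqref{R2.1.1}, which leads to the same conclusion.
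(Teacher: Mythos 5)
Your proposal is correct and follows essentially the same route as the paper: verify that $\lambda_1\lambda_2>1$ forces condition~\eqref{T2.1.1} and then invoke Theorem~\ref{T2.3}. The only cosmetic difference is that you obtain~\eqref{T2.1.1} directly from the lower bound $g_i(\zeta)\ge g_i(1)>0$ (by passing to constant $\tilde g_i$), whereas the paper cites Lemma~\ref{L3.3} to bound $\zeta^{\lambda_1\lambda_2}g_1(\zeta)g_2^{\lambda_1}(\zeta)$ below by $\zeta^{\lambda}$ with some $\lambda>1$; both amount to the same elementary estimate.
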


\begin{Corollary}\label{C2.4}
Let $\lambda_1 = \lambda_2 = \mu_1 = \mu_2 = 1$ and
\begin{equation}
	\int_1^\infty
	\frac{
		d\zeta
	}{
		(g_1 (\zeta) g_2 (\zeta))^{1 / (m_1 + m_2)}	
		\zeta
	}
	<
	\infty.
	\label{C2.4.1}
\end{equation}
Then all solutions of~\eqref{1.1} are trivial.
\end{Corollary}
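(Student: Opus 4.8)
The plan is to derive Corollary~\ref{C2.4} directly from Theorem~\ref{T2.3}, which is applicable precisely because the hypothesis $\mu_1 = \mu_2 = 1$ matches the standing assumption of that theorem. Thus the only thing to check is that condition~\eqref{C2.4.1} implies condition~\eqref{T2.1.1} when $\lambda_1 = \lambda_2 = 1$. With these values of the exponents, the integrand in~\eqref{T2.1.1} becomes
$$
	(\zeta \, g_1 (\zeta) g_2 (\zeta))^{-1/(m_1 + m_2)}
	\zeta^{1/(m_1+m_2) - 1}
	=
	(g_1 (\zeta) g_2 (\zeta))^{-1/(m_1+m_2)}
	\zeta^{-1},
$$
since the powers of $\zeta$ combine as $-1/(m_1+m_2) + 1/(m_1+m_2) - 1 = -1$. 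Hence the integral in~\eqref{T2.1.1} coincides exactly with the integral in~\eqref{C2.4.1}, and its finiteness is the hypothesis we are given.

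It remains only to note that the structural hypotheses needed to invoke Theorem~\ref{T2.3} — namely the existence of the numbers $\lambda_i, \mu_i$ and the monotone continuous functions $g_i, h_i$ satisfying~\eqref{1.4}--\eqref{1.7} — are part of the blanket assumptions in force throughout Section~\ref{T2.1}'s setting, so nothing further must be verified. In particular, $\lambda_1 = \lambda_2 = 1$ forces the functions $g_1, g_2$ appearing in~\eqref{1.4} to be the relevant ones, and condition~\eqref{1.6} on them is already assumed.

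There is essentially no obstacle here: the corollary is a pure specialization, and the only ``work'' is the one-line exponent bookkeeping above showing that $\lambda_1 = \lambda_2 = 1$ collapses~\eqref{T2.1.1} into~\eqref{C2.4.1}. One could equally well observe that Corollary~\ref{C2.4} is the common refinement of Corollary~\ref{C2.3} (which handles $\lambda_1\lambda_2 > 1$, $\mu_1 = \mu_2 = 1$) in the borderline case $\lambda_1\lambda_2 = 1$, where the strict inequality is replaced by the convergence condition~\eqref{C2.4.1} on the slowly varying factors $g_1, g_2$; but the cleanest route is the direct reduction to Theorem~\ref{T2.3} just described.
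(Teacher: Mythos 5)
Your proposal is correct and coincides with the paper's own proof: the paper likewise observes that for $\lambda_1 = \lambda_2 = 1$ condition~\eqref{T2.1.1} reduces exactly to~\eqref{C2.4.1} and then invokes Theorem~\ref{T2.3}. The exponent bookkeeping you carried out is precisely the reduction the paper leaves implicit.
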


The proof of Theorems~\ref{T2.1}--\ref{T2.3} and Corollaries~\ref{C2.1}--\ref{C2.4} is given in Section~\ref{proof}. 
Now we demonstrate some applications of these statements.

\begin{Example}\label{E2.1}
Consider the system
\begin{equation}
	\left\{
		\begin{aligned}
			&
			\sum_{|\alpha| = m_1}
			\partial^\alpha
			a_\alpha (x, u_1)
			\ge
			|u_2|^{\lambda_1}
			&
			\mbox{in } \mathbb R^n,
			\\
			&
			\sum_{|\alpha| = m_2}
			\partial^\alpha
			b_\alpha (x, u_2)
			\ge
			|u_1|^{\lambda_2}
			&
			\mbox{in } \mathbb R^n.
		\end{aligned}
	\right.
	\label{E2.1.1}
\end{equation}
It does not present any particular problem to verify that both conditions~\eqref{T2.1.1} and~\eqref{T2.1.2} take the form $\lambda_1 \lambda_2 > 1$. 
In so doing,~\eqref{T2.1.2} and~\eqref{R2.1.2} are equivalent to
\begin{equation}
	(n - m_2) \lambda_1 \lambda_2 - m_1 \lambda_2 - n \le 0
	\label{E2.1.2}
\end{equation}
and
\begin{equation}
	(n - m_1) \lambda_2 \lambda_1 - m_2 \lambda_1 - n \le 0,
	\label{E2.1.3}
\end{equation}
respectively.
Thus, if $\lambda_1 \lambda_2 > 1$ and at least one of inequalities~\eqref{E2.1.2}, \eqref{E2.1.3} is valid, then in accordance with Corollary~\ref{C2.1} all solutions of~\eqref{E2.1.1} are trivial.

The above blow-up conditions are exact. We also note that these conditions coincide with those given in~\cite[Theorem~19.1]{MPbook}.
\end{Example}

\begin{Example}\label{E2.2}
Let us examine the critical exponents $\lambda_1 \ge 1$ and $\lambda_2 \ge 1$ in the condition $\lambda_1 \lambda_2 > 1$.
Consider the system
\begin{equation}
	\left\{
		\begin{aligned}
			&
			\sum_{|\alpha| = m_1}
			\partial^\alpha
			a_\alpha (x, u_1)
			\ge
			|u_2| \log^{\nu_1} (e + |u_2|)
			&
			\mbox{in } \mathbb R^n,
			\\
			&
			\sum_{|\alpha| = m_2}
			\partial^\alpha
			b_\alpha (x, u_2)
			\ge
			|u_1| \log^{\nu_2} (e + |u_1|)
			&
			\mbox{in } \mathbb R^n,
		\end{aligned}
	\right.
	\label{E2.2.1}
\end{equation}
where $\nu_i \ge 0$, $i = 1,2$, are real numbers. By Theorem~\ref{T2.1}, if
\begin{equation}
	\nu_1 + \nu_2 > m_1 + m_2,
	\label{E2.2.2}
\end{equation}
then all solutions of~\eqref{E2.2.1} are trivial. Condition~\eqref{E2.2.2} is exact. Indeed, assume that
$$
	\nu_1 + \nu_2 \le m_1 + m_2.
$$
It this case, either $\nu_1 \le m_1$ or $\nu_2 \le m_2$.
Let us assume for certainty that $\nu_1 \le m_1$.
Putting
$$
	u_1 (x_1) 
	= 
	|x_1|^{m_1}
	e^{
		e^{|x_1|}
	}
$$
and
$$
	u_2 (x_1) 
	=
	|x_1|^{m_2}
	e^{
		(m_1 - \nu_1 + m_2) 
		|x_1|
	}
	e^{
		e^{|x_1|}
	},
$$
we obtain non-trivial solutions of the system
$$
	\left\{
		\begin{aligned}
			&
			\frac{
				\partial^{m_1} (\operatorname{sign}	 x_1)^{m_1} a u_1
			}{
				\partial x_1^{m_1}
			}
			\ge
			|u_2| \log^{\nu_1} (e + |u_2|)
			&
			\mbox{in } \mathbb R^n,
			\\
			&
			\frac{
				\partial^{m_2} (\operatorname{sign}	 x_1)^{m_2} b u_2
			}{
				\partial x_1^{m_2}
			}
			\ge
			|u_1| \log^{\nu_2} (e + |u_1|)
			&
			\mbox{in } \mathbb R^n,
		\end{aligned}
	\right.
$$
where $a > 0$ and $b > 0$ are sufficiently large real numbers.
\end{Example}

\section{Proof of the main results}\label{proof}

In this section, by $C$ and $\sigma$ we denote various positive constants that can depend only on $A$, 
$n$, $m_i$, $\lambda_i$, $\mu_i$, and on 
the functions $g_i$ and $h_i$.
Let us also denote by $B_r$ the open ball of radius $r > 0$ centered at zero. In so doing, by $|B_r|$ we mean the $n$-dimensional volume of this ball.

\begin{Lemma}\label{L3.1}
Let $u_1$ and $u_2$ be solutions of system~\eqref{1.1}, then
\begin{equation}
	\int_{
		B_{r_2}
		\setminus
		B_{r_1}
	}
	|u_1|
	\,
	dx
	\ge
	C
	(r_2 - r_1)^{m_1}
	\int_{
		B_{r_1}
	}
	f_1 (|u_2|)
	\,
	dx
	\label{L3.1.1}
\end{equation}
and
\begin{equation}
	\int_{
		B_{r_2}
		\setminus
		B_{r_1}
	}
	|u_2|
	\,
	dx
	\ge
	C
	(r_2 - r_1)^{m_2}
	\int_{
		B_{r_1}
	}
	f_2 (|u_1|)
	\,
	dx
	\label{L3.1.2}
\end{equation}
for all real numbers $0 < r_1 < r_2$ such that $r_2 \le 2 r_1$. 
\end{Lemma}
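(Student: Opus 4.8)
The two inequalities are symmetric, so it suffices to establish \eqref{L3.1.1}; the argument for \eqref{L3.1.2} is identical with the roles of the two inequalities in \eqref{1.1} interchanged. The natural approach is to insert into the weak formulation \eqref{1.2} a suitable non-negative test function $\varphi \in C_0^\infty(\mathbb R^n)$ that equals $1$ on $B_{r_1}$, is supported in $B_{r_2}$, and satisfies the derivative bounds $|\partial^\alpha \varphi| \le C (r_2 - r_1)^{-m_1}$ for all $|\alpha| = m_1$. Such a cutoff exists by the standard mollification construction; the constant $C$ depends only on $n$ and $m_1$, which is consistent with the conventions of this section. Since $r_2 \le 2 r_1$ we also have $\supp \varphi \subset B_{r_2} \subset B_{2 r_1}$, but more to the point $r_2 - r_1 \le r_1$, so there is no loss in the scaling.

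With this $\varphi$, the right-hand side of \eqref{1.2} is bounded below by $\int_{B_{r_1}} f_1(u_2)\,\varphi\,dx = \int_{B_{r_1}} f_1(u_2)\,dx$, using $\varphi \equiv 1$ on $B_{r_1}$ and $f_1(u_2) \ge 0$. For the left-hand side, first note that $\partial^\alpha \varphi$ vanishes on $B_{r_1}$ (where $\varphi$ is constant) and outside $B_{r_2}$, so the integral is really over the annulus $B_{r_2} \setminus B_{r_1}$. Then apply the structural bound $|a_\alpha(x, u_1)| \le A |u_1|$ together with $|\partial^\alpha \varphi| \le C (r_2 - r_1)^{-m_1}$ and the fact that the number of multi-indices $\alpha$ with $|\alpha| = m_1$ is a constant depending only on $n$ and $m_1$. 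This yields
$$
	\int_{B_{r_1}} f_1(u_2)\,dx
	\le
	C (r_2 - r_1)^{-m_1}
	\int_{B_{r_2} \setminus B_{r_1}} |u_1|\,dx ,
$$
which is \eqref{L3.1.1} after rearranging. One should also record that $f_1(u_2)$ here means $f_1(|u_2|)$: since $f_i$ is defined on $[0,\infty)$ and is non-decreasing, the convention throughout is that $f_i(u)$ stands for $f_i(|u|)$, so the statement of the lemma is consistent with \eqref{1.2}.

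The only genuine point requiring care is the construction of the cutoff function with the correct derivative bounds — and that the weak formulation \eqref{1.2}, which is a priori stated only for smooth compactly supported $\varphi$, applies to the specific $\varphi$ built here. Since that $\varphi$ is itself in $C_0^\infty(\mathbb R^n)$, there is in fact nothing to approximate. The construction of $\varphi$ itself is routine: take the indicator of a ball of intermediate radius $(r_1 + r_2)/2$ and mollify at scale comparable to $(r_2 - r_1)/4$, which gives $\|\partial^\alpha \varphi\|_\infty \le C (r_2 - r_1)^{-|\alpha|}$ for every $\alpha$. Thus I expect no serious obstacle; the lemma is a clean test-function estimate, and the hypothesis $r_2 \le 2 r_1$ is not even used in this proof but is presumably retained for convenience in the later iteration arguments where the annulus geometry matters.
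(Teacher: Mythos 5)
Your proposal is correct and is essentially the paper's argument: plug a cutoff $\varphi$ that equals $1$ on $B_{r_1}$ and is supported in $B_{r_2}$ into the weak formulation \eqref{1.2}, use $|a_\alpha(x,u_1)|\le A|u_1|$ together with $|\partial^\alpha\varphi|\le C(r_2-r_1)^{-m_1}$ (the derivatives vanish off the annulus), and note $f_1(u_2)\varphi\ge f_1(|u_2|)$ on $B_{r_1}$; the second inequality follows by symmetry from \eqref{1.3}. The only difference is the cutoff itself: the paper takes the radial profile $\varphi(x)=\varphi_0\bigl((r_2-|x|)/(r_2-r_1)\bigr)$, whose $m_1$-th order derivatives contain factors $|x|^{-j}$ from differentiating $|x|$, and these are absorbed into $(r_2-r_1)^{-m_1}$ precisely because $|x|\ge r_1\ge r_2-r_1$ on the annulus --- this is where the hypothesis $r_2\le 2r_1$ enters the paper's proof, whereas your mollified-indicator construction gives the uniform bound $\|\partial^\alpha\varphi\|_\infty\le C(r_2-r_1)^{-|\alpha|}$ by Young's inequality without that restriction, so your closing remark that $r_2\le 2r_1$ is not needed in your version is accurate (it is needed, or at least convenient, in the paper's version, and is harmless since later applications satisfy it anyway).
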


\begin{proof}
We put
$$
	\varphi (x)
	=
	\varphi_0
	\left(
		\frac{r_2 - |x|}{r_2 - r_1}
	\right),
$$
where $\varphi_0 \in C^\infty ({\mathbb R})$ is a non-negative function satisfying the conditions
$$
	\left.
		\varphi_0
	\right|_{
		(- \infty, 0]
	}
	=
	0
	\quad
	\mbox{and}
	\quad
	\left.
		\varphi_0
	\right|_{
		[1, \infty)
	}
	=
	1.
$$
It is easy to see that
$$
	\left|
		\int_{\mathbb R^n}
		\sum_{|\alpha| = m_1}
		a_\alpha (x, u_1)
		\partial^\alpha
		\varphi
		\,
		dx
	\right|
	\le
	\frac{
		\sigma
	}{
		(r_2 - r_1)^{m_1}
	}
	\int_{
		B_{r_2}
		\setminus
		B_{r_1}
	}
	|u_1|
	\,
	dx
$$
and
$$
	\int_{\mathbb R^n}
	f_1 (u_2)
	\varphi
	\,
	dx
	\ge
	\int_{
		B_{r_1}
	}
	f_1 (|u_2|)
	\,
	dx.
$$
Thus, taking $\varphi$ as a test function in~\eqref{1.2}, we obtain~\eqref{L3.1.1}.
Repeating the previous argument with~\eqref{1.2} replaced by~\eqref{1.3}, we arrive at~\eqref{L3.1.2}.
\end{proof}

\begin{Lemma}\label{L3.2}
Let $u_1$ and $u_2$ be solutions of~\eqref{1.1} and $R > 0$ be a real number.
Then
\begin{equation}
	I_1 (r_2) - I_1 (r_1)
	\ge
	C 
	(r_2 - r_1)^{m_1}
	f_1 (I_2 (r_1))
	\label{L3.2.1}
\end{equation}
and
\begin{equation}
	I_2 (r_2) - I_2 (r_1)
	\ge
	C 
	(r_2 - r_1)^{m_2}
	f_2 (I_1 (r_1))
	\label{L3.2.2}
\end{equation}
for all real numbers $R \le r_1 < r_2 \le 2 R$,
where
\begin{equation}
	I_i (r)
	=
	\frac{
		1
	}{
		|B_{2 R}|
	}
	\int_{
		B_r
	}
	|u_i|
	\,
	dx,
	\quad
	i = 1,2.
	\label{L3.2.3}
\end{equation}
\end{Lemma}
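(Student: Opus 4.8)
The plan is to deduce Lemma~\ref{L3.2} from Lemma~\ref{L3.1} by combining the integral estimates~\eqref{L3.1.1} and~\eqref{L3.1.2} with the convexity of $f_1$ and $f_2$ via Jensen's inequality. First I would take $0 < r_1 < r_2$ with $R \le r_1 < r_2 \le 2R$, so that in particular $r_2 \le 2 r_1$ and Lemma~\ref{L3.1} applies. From~\eqref{L3.1.1} we have
\begin{equation*}
	\int_{B_{r_2} \setminus B_{r_1}} |u_1| \, dx
	\ge
	C (r_2 - r_1)^{m_1}
	\int_{B_{r_1}} f_1 (|u_2|) \, dx,
\end{equation*}
and since $B_{r_2} \setminus B_{r_1} \subset B_{r_2}$ the left-hand side is at most $\int_{B_{r_2}} |u_1| \, dx - \int_{B_{r_1}} |u_1| \, dx$. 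Dividing through by $|B_{2R}|$ turns the left side into exactly $I_1 (r_2) - I_1 (r_1)$.

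The key step is to bound $\int_{B_{r_1}} f_1 (|u_2|) \, dx$ from below by $|B_{2R}| \, f_1 (I_2 (r_1))$. Here I would use that $f_1$ is non-negative, non-decreasing and convex on $[0, \infty)$, together with the obvious inequality $\int_{B_{r_1}} f_1(|u_2|)\,dx \ge \int_{B_{2R}} \mathbf{1}_{B_{r_1}} f_1(|u_2|)\,dx$. Applying Jensen's inequality to the probability measure $dx / |B_{2R}|$ restricted and the convex function $f_1$ (extending it suitably, or rather using $\frac{1}{|B_{2R}|}\int_{B_{2R}} f_1(|u_2|\mathbf 1_{B_{r_1}})\,dx \ge f_1\bigl(\frac{1}{|B_{2R}|}\int_{B_{r_1}} |u_2|\,dx\bigr)$, which is valid because $f_1(0) \ge 0$ and $|B_{r_1}| \le |B_{2R}|$) gives
\begin{equation*}
	\frac{1}{|B_{2R}|} \int_{B_{r_1}} f_1 (|u_2|) \, dx
	\ge
	f_1 \left( \frac{1}{|B_{2R}|} \int_{B_{r_1}} |u_2| \, dx \right)
	=
	f_1 (I_2 (r_1)).
\end{equation*}
Combining this with the previous display yields~\eqref{L3.2.1}, and the same argument applied to~\eqref{L3.1.2} with $f_2$ in place of $f_1$ and $m_2$ in place of $m_1$ gives~\eqref{L3.2.2}.

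The main obstacle, such as it is, is the correct bookkeeping around Jensen's inequality: one must be careful that the measure against which one averages is a \emph{probability} measure (hence the normalization by $|B_{2R}|$ rather than by $|B_{r_1}|$), and that discarding the mass of $f_1(|u_2|)$ outside $B_{r_1}$ together with the inequality $f_1 \ge 0$ does not break monotonicity — this is precisely why the definition~\eqref{L3.2.3} normalizes all the $I_i$ by the single quantity $|B_{2R}|$ rather than by the varying $|B_r|$. Once this is set up correctly, the proof is a direct chain of inequalities with no further analytic input.
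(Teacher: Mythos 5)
Your overall route is the same as the paper's: combine Lemma~\ref{L3.1} with Jensen's inequality for the convex, non-decreasing $f_i$. However, the precise form of Jensen you chose contains a step that fails. You pass from $\int_{B_{r_1}} f_1(|u_2|)\,dx$ to $\int_{B_{2R}} f_1(|u_2|\mathbf{1}_{B_{r_1}})\,dx$ and apply Jensen over $B_{2R}$; but these two integrals differ by $(|B_{2R}|-|B_{r_1}|)\,f_1(0)\ge 0$, and the one you must bound from below is the \emph{smaller} one. The hypotheses ($f_1$ non-negative, non-decreasing, convex, satisfying \eqref{1.4}--\eqref{1.5}) do not force $f_1(0)=0$; for instance $f_1(\zeta)=\zeta+c$ with $c>0$ is admissible, and for it your displayed inequality
$$
\frac{1}{|B_{2R}|}\int_{B_{r_1}} f_1(|u_2|)\,dx \ge f_1(I_2(r_1))
$$
is false: the left side equals $I_2(r_1)+c\,|B_{r_1}|/|B_{2R}|$ while the right side equals $I_2(r_1)+c$. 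Your justification ``$f_1(0)\ge 0$'' points in the wrong direction --- it shows that the truncated integral dominates the original one, not the reverse.

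The repair is exactly the paper's argument, and it costs only a multiplicative constant, which the statement permits: apply Jensen over $B_{r_1}$ with its own normalization,
$$
\frac{1}{|B_{r_1}|}\int_{B_{r_1}} f_1(|u_2|)\,dx
\ge
f_1\left(\frac{1}{|B_{r_1}|}\int_{B_{r_1}}|u_2|\,dx\right)
\ge
f_1\left(\frac{1}{|B_{2R}|}\int_{B_{r_1}}|u_2|\,dx\right),
$$
the second step by monotonicity of $f_1$, and then multiply through by $|B_{r_1}|/|B_{2R}|\ge|B_R|/|B_{2R}|=2^{-n}$, absorbing this factor into $C$. Note that this last step is precisely where the hypothesis $r_1\ge R$ enters; your scheme of normalizing everything by $|B_{2R}|$ from the outset is what obscures the need for it.
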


\begin{proof}
From formula~\eqref{L3.1.1} of Lemma~\ref{L3.1}, it follows that
\begin{equation}
	\frac{
		1
	}{
		|B_{2 R}|
	}
	\int_{
		B_{r_2}
		\setminus
		B_{r_1}
	}
	|u_1|
	\,
	dx
	\ge
	\frac{
		C
		(r_2 - r_1)^{m_1}
	}{
		|B_{2 R}|
	}
	\int_{
		B_{r_1}
	}
	f_1 (|u_2|)
	\,
	dx.
	\label{PL3.2.1}
\end{equation}
Since $f_1$ is a non-decreasing convex function, we have
$$
	\frac{
		1
	}{
		|B_{r_1}|
	}
	\int_{
		B_{r_1}
	}
	f_1 (|u_2|)
	\,
	dx
	\ge
	f_1 
	\left(
		\frac{
			1
		}{
			|B_{r_1}|
		}
		\int_{
			B_{r_1}
		}
		|u_2|
		\,
		dx
	\right)
	\ge
	f_1 
	\left(
		\frac{
			1
		}{
			|B_{2 R}|
		}
		\int_{
			B_{r_1}
		}
		|u_2|
		\,
		dx
	\right).
$$
According to~\eqref{PL3.2.1}, this implies~\eqref{L3.2.1}.
By similar reasoning, formula~\eqref{L3.1.2} of Lemma~\ref{L3.1} leads to~\eqref{L3.2.2}.
\end{proof}

We also need the following simple statement.

\begin{Lemma}\label{L3.3}
The function $g_i$ and $h_i$ satisfy the estimates
\begin{equation}
	g_i (\zeta) 
	\le 
	C 
	\log^\sigma \zeta
	\quad
	\mbox{and}
	\quad
	h_i 
	\left( 
		\frac{1}{\zeta} 
	\right) 
	\ge 
	C 
	\log^{- \sigma} \zeta
	\label{L3.3.1}
\end{equation}
for all $\zeta \in [e, \infty)$, $i = 1,2$.
\end{Lemma}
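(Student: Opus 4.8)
The plan is to extract from \eqref{1.6} and \eqref{1.7} — specialized to $\varkappa = 1/2$ — one–step "doubling" inequalities, and then iterate them along the dyadic scale after passing to the logarithmic variable. The point is that the number of iterations needed to reach a fixed compact interval is only $O(\log\log\zeta)$, so the accumulated multiplicative constant is polylogarithmic in $\zeta$.

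For $g_i$, apply \eqref{1.6} with $\varkappa = 1/2$: there are constants $c\in(0,1]$ and $\zeta_0\ge e$ with $g_i(\zeta^{1/2}) \ge c\, g_i(\zeta)$ for all $\zeta\ge\zeta_0$. Writing $G(t) = g_i(e^t)$ and $t_0 = \log\zeta_0 \ge 1$, this reads $G(t) \le c^{-1} G(t/2)$ for $t\ge t_0$, and $G$ is non-decreasing. Given $t\ge 2t_0$, choose the integer $k\ge 1$ with $t_0 \le t/2^k < 2t_0$, so that $k \le \log_2(t/t_0)$ and all intermediate arguments $t/2^j$ ($0\le j\le k$) exceed $t_0$; iterating $k$ times gives $G(t) \le c^{-k} G(t/2^k) \le c^{-k} G(2t_0)$. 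Since $c\le 1$ we have $c^{-k} \le (t/t_0)^{\sigma}$ with $\sigma = \log(1/c)/\log 2$, hence $G(t) \le C t^\sigma$ for $t\ge 2t_0$, i.e. $g_i(\zeta) \le C \log^\sigma\zeta$ for $\zeta\ge\zeta_0^2$. On the compact interval $[e,\zeta_0^2]$ the bound is immediate from the continuity (hence boundedness) of $g_i$ together with $\log\zeta\ge 1$, after enlarging $C$.

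For $h_i$ the argument is symmetric. Applying \eqref{1.7} with $\varkappa = 1/2$ gives $c\in(0,1]$ and $\delta\in(0,1/e]$ with $h_i(\zeta) \ge c\, h_i(\zeta^{1/2})$ for all $\zeta\in(0,\delta]$. Put $H(t) = h_i(e^{-t})$ and $t_1 = \log(1/\delta) \ge 1$; then $H(t) \ge c\, H(t/2)$ for $t\ge t_1$, and $H$ is non-increasing. For $t\ge 2t_1$ choose $k\ge 1$ with $t_1 \le t/2^k < 2t_1$ and iterate: $H(t) \ge c^k H(t/2^k) \ge c^k H(2t_1) \ge (t/t_1)^{-\sigma} H(2t_1)$ with $\sigma = \log(1/c)/\log 2$, where $H(2t_1) = h_i(\delta^2) > 0$. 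Substituting $t = \log\zeta$ yields $h_i(1/\zeta) \ge C\log^{-\sigma}\zeta$ for $\zeta\ge e^{2t_1}$, and on the remaining range $\zeta\in[e,e^{2t_1}]$ the estimate follows because $h_i$ is continuous and strictly positive on the corresponding compact set $[\delta^2, 1/e]$, while $\log^{-\sigma}\zeta \le 1$ there. Taking $\sigma$ to be the larger of the two exponents produced for $g_i$ and $h_i$, and $i=1,2$, completes the proof.

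As for the main obstacle: there is essentially none of substance — the statement is, as noted in the text, elementary. The only matters requiring a little care are the index bookkeeping that keeps the iteration count at $O(\log\log\zeta)$ (so that $c^{-k}$ and $c^k$ are only polylogarithmic), the choice of the constants $\zeta_0\ge e$ and $\delta\le 1/e$ so that $\log$ is comparable to $1$ at the endpoints, and the routine compactness reduction handling the bounded leftover interval.
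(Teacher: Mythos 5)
Your proof is correct and follows essentially the same route as the paper: both iterate the one-step inequalities extracted from \eqref{1.6} and \eqref{1.7} along a geometric descent in $\log\zeta$ (the paper uses the grid $e^{e^k}$, i.e.\ effectively $\varkappa=1/e$, you use $\varkappa=1/2$), so the $O(\log\log\zeta)$ iterations turn the per-step constant into a power of $\log\zeta$. Your explicit handling of the leftover compact interval where the asymptotic one-step inequality need not hold is a detail the paper glosses over, but it is not a different method.
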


\begin{proof}
For any $\zeta \in [e, \infty)$ we have
$$
	\zeta 
	\le 
	e^{
		e^{[\log \log \zeta] + 1}
	},
$$
where $[\log \log \zeta]$ is the integer part of $\log \log \zeta$.
In so doing, by~\eqref{1.6},
$$
	g_i 
	\left(
		e^{
			e^{[\log \log \zeta] + 1}
		}
	\right)
	\le
	C
	g_i 
	\left(
		e^{
			e^{[\log \log \zeta]}
		}
	\right)
	\le
	C^2
	g_i 
	\left(
		e^{
			e^{[\log \log \zeta] - 1}
		}
	\right)
	\le
	\ldots
	\le
	C^{[\log \log \zeta] + 1}
	g_i (e),
$$
whence in accordance with the monotonicity of the functions $g_i$ and the fact that
$$
	C^{[\log \log \zeta] + 1}
	\le
	C^{\log \log \zeta + 1}
	=
	C
	\log^{\log C} \zeta
$$
we readily obtain the first inequality in~\eqref{L3.3.1}.

In its turn,~\eqref{1.7} yields
$$
	h_i 
	\left(
		e^{
			- e^{[\log \log \zeta] + 1}
		}
	\right)
	\ge
	C
	h_i 
	\left(
		e^{
			- e^{[\log \log \zeta]}
		}
	\right)
	\ge
	C^2
	h_i 
	\left(
		e^{
			- e^{[\log \log \zeta] - 1}
		}
	\right)
	\ge
	\ldots
	\ge
	C^{[\log \log \zeta] + 1}
	h_i \left( \frac{1}{e} \right).
$$
This implies the second inequality in~\eqref{L3.3.1}.
\end{proof}

\begin{Lemma}\label{L3.4}
Let $u_1$ and $u_2$ be non-trivial solutions of~\eqref{1.1}. If~\eqref{T2.1.1} is valid, then
\begin{equation}
	\lim_{R \to \infty}
	\frac{
		1
	}{
		R^n
	}
	\int_{
		B_R
	}
	|u_1|
	\,
	dx
	=
	0.
	\label{L3.4.1}
\end{equation}
\end{Lemma}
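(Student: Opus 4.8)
The plan is to argue by contradiction, assuming that $u_1$ and $u_2$ are non-trivial solutions of~\eqref{1.1} for which the limit~\eqref{L3.4.1} fails, i.e. $\limsup_{R \to \infty} R^{-n} \int_{B_R} |u_1| \, dx > 0$. First I would set up the iteration apparatus provided by Lemma~\ref{L3.2}: fixing a large $R > 0$ and working on the dyadic annulus $R \le r \le 2R$, the coupled inequalities~\eqref{L3.2.1} and~\eqref{L3.2.2} for the averaged quantities $I_i(r)$ should be combined into a single differential-type inequality for $I_1$ alone. Substituting~\eqref{L3.2.2} into~\eqref{L3.2.1} and iterating over a chain of intermediate radii $r_1 < r_2 < \cdots$ in $[R, 2R]$, one gets a recursive lower bound of the form $I_1(r_{k+1}) - I_1(r_k) \ge C (r_{k+1} - r_k)^{m_1} f_1\!\left( C (r_k - r_{k-1})^{m_2} f_2(I_1(r_{k-1})) \right)$; choosing the radii so that $r_{k+1} - r_k \sim R \cdot 2^{-k}$ keeps all increments comparable while their sum stays below $R$.

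The heart of the argument is to convert this recursion, together with the lower bounds~\eqref{1.4} on $f_1, f_2$, into a \emph{finite-time blow-up} statement for the sequence $I_1(r_k)$: one shows that if $R^{-n} \int_{B_R} |u_1|\, dx$ — equivalently $I_1(R) / |B_R| \cdot |B_{2R}|$, i.e. roughly $I_1(R)$ up to a dimensional constant — is bounded below, then $I_1(r_k) \to \infty$ at some finite radius $r_* \le 2R$, forcing $\int_{B_{r_*}} |u_1| \, dx = \infty$ and contradicting $u_1 \in L_{1,loc}$. The key quantitative input is the convergence of the integral~\eqref{T2.1.1}: after plugging $f_i(\zeta) \ge \zeta^{\lambda_i} g_i(\zeta)$ into the recursion and tracking how the exponents $m_1 + \lambda_1 m_2$ and the weights $g_1(\zeta) g_2^{\lambda_1}(\zeta)$ appear, the statement ``the iterated radii stay bounded'' becomes precisely the statement ``$\int^\infty (\zeta^{\lambda_1\lambda_2} g_1(\zeta) g_2^{\lambda_1}(\zeta))^{-1/(m_1+\lambda_1 m_2)} \zeta^{1/(m_1+\lambda_1 m_2)-1}\, d\zeta < \infty$''. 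Here I would use the standard trick of passing from the discrete recursion to an integral comparison: if $J(r)$ denotes $I_1$ along the chain, then $dr \lesssim \big( J(r)^{\lambda_1\lambda_2} g_1(J) g_2^{\lambda_1}(J)\big)^{-1/(m_1+\lambda_1 m_2)} \, dJ^{1/(m_1+\lambda_1 m_2)}$, so the total radius consumed is controlled by the integral in~\eqref{T2.1.1} evaluated from $I_1(R)$ to $\infty$ — which is finite and, crucially, can be made arbitrarily small by taking $R$ large, since $I_1(R) \to \infty$ need not hold but $I_1(R)$ is at least bounded away from $0$ after rescaling.

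The regularity hypotheses~\eqref{1.6} on $g_i$ enter to guarantee that the weights behave well under the power substitutions forced by the nesting of $f_1 \circ f_2$ (one needs $g_i(\zeta^{\lambda_j}) \gtrsim g_i(\zeta)^{c}$-type estimates, which~\eqref{1.6} delivers, and Lemma~\ref{L3.3} gives the crude bound $g_i(\zeta) \le C \log^\sigma \zeta$ ensuring the logarithmic-type corrections never destroy convergence of~\eqref{T2.1.1}). I expect the main obstacle to be the bookkeeping in the iteration: one must choose the increments $r_{k+1} - r_k$ carefully so that (i) their sum converges and stays $\le R$, (ii) each factor $(r_{k+1}-r_k)^{m_i}$ contributes exactly the right power to reproduce the exponent $m_1 + \lambda_1 m_2$ in~\eqref{T2.1.1}, and (iii) the composition of the two convex nonlinearities is estimated from below without losing the precise weight $g_1 g_2^{\lambda_1}$. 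Once the recursion is correctly normalized, the contradiction with $u_1 \in L_{1,loc}(\mathbb R^n)$ — namely that $\int_{B_{r_*}}|u_1|\,dx$ would be infinite for some finite $r_*$ — is immediate, and letting $R \to \infty$ while noting that the ``radius budget'' from~\eqref{T2.1.1} shrinks to zero yields~\eqref{L3.4.1}.
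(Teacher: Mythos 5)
Your overall strategy is the same as the paper's (contradiction, chaining the coupled estimates of Lemma~\ref{L3.2} through a string of intermediate radii in $[R,2R]$, substituting \eqref{1.4}, and comparing the total radius consumed with the convergent integral \eqref{T2.1.1}, with \eqref{1.6} and Lemma~\ref{L3.3} controlling the weights), but the step you yourself flag as ``the main obstacle'' is exactly where your plan, as written, breaks down. First, the radii cannot be prescribed in advance by $r_{k+1}-r_k\sim R\,2^{-k}$: the discrete-to-integral comparison needs the increments of the unknown, not of the radius, to be comparable. The paper chooses the $r_i$ adaptively so that $I_1(r_{i+1})=2I_1(r_i)$; only then does one have
$\int_{I_1(r_i)}^{2I_1(r_i)}\bigl(\zeta^{\lambda_1\lambda_2}g_1(\cdot)g_2^{\lambda_1}(\cdot)\bigr)^{-1/(m_1+\lambda_1 m_2)}\zeta^{1/(m_1+\lambda_1 m_2)-1}\,d\zeta\ge C\,(r_{i+1}-r_i)$, because on a doubling interval the integrand is comparable to its value at $I_1(r_i)$, and the intervals are disjoint so the sum is dominated by the full integral. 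With $\Delta I_1$ possibly much smaller than $I_1$ (which your fixed geometric increments do not rule out), the elementary inequality $(a+b)^{1/M}-a^{1/M}\ge c\,b^{1/M}$ fails and the comparison goes the wrong way, so divergence of $I_1(r_k)$ inside $B_{2R}$ does not follow. Second, nothing in your sketch handles the steps where $r_{i+1}-r_i$ is very small: in the iterated bound the argument of $g_1$ is $\sigma(r_{i+1}-r_i)^{m_2}I_1^{\lambda_2}(r_i)g_2(\cdot)$, which degenerates for tiny increments (after extending $g_1$ to $(0,\infty)$ it may tend to $0$). The paper deals with this by splitting the indices into $\Xi_1$ (increments below the threshold $I_1^{-\varepsilon}(r_i)$) and $\Xi_2$, showing $\sum_{\Xi_1}(r_{i+1}-r_i)<R/2$ so that the ``productive'' steps still consume radius at least $R/2$, and then using \eqref{1.6} to replace $g_1(\sigma\zeta^{\lambda_2-\varepsilon m_2}g_2(\delta))$ by $g_1(\zeta)$ up to constants. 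Your plan has no analogue of this and it is not optional.

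Two smaller points. Your claim that the ``radius budget'' $\int_{I_1(R)}^{\infty}$ can be made arbitrarily small as $R\to\infty$ is false under the contradiction hypothesis: one only knows $I_1(R_i)\ge\delta$ along a sequence, so the budget is merely a fixed finite constant $\int_\delta^\infty(\cdots)\,d\zeta$; that is still enough, since the chain must consume radius $\ge cR_i\to\infty$, which is how the paper closes the argument (estimate \eqref{PL3.4.6}--\eqref{PL3.4.7}). Relatedly, the cleaner contradiction is this budget-versus-$CR$ comparison rather than literal blow-up of $\int_{B_{r_*}}|u_1|$ at a finite radius: the chain simply terminates at $2R$ with $I_1(2R)<\infty$, and it is the inequality ``finite integral $\ge CR$'' that is absurd for large $R$.
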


\begin{proof}
We extend the functions $g_i$ to the whole interval $(0, \infty)$ by putting
$$
	g_i (\zeta)
	=
	\min
	\left\{
		g_i (1),
		\inf_{s \in [\zeta, 1]}
		\frac{
			f_i (s)
		}{
			s^{\lambda_i}
		}
	\right\}
	\quad
	\mbox{for all } \zeta \in (0, 1],
	\quad
	i = 1,2.
$$
In this case, inequality~\eqref{1.4} should obviously be fulfilled for all $\zeta \in (0, \infty)$. In so doing, the functions $g_i$ remain continuous and non-decreasing on the entire interval $(0, \infty)$. 

Denote
$$
	E_i (r)
	=
	\int_{B_r}
	|u_i|
	\,
	dx,
	\quad
	i = 1,2.
$$
If one of the functions $u_1$ or $u_2$ is equal to zero almost everywhere in $\mathbb R^n$, then in accordance with the definition of solutions of~\eqref{1.1} the second of these function is also equal to zero almost everywhere in $\mathbb R^n$. 
Therefore, there is a real number 

Let $R \in [R_0, \infty)$ and $l$ be the minimal positive integer such that $2^l I_1 (R) \ge I_1 (2 R)$,
where the functions $I_1$ is defined by~\eqref{L3.2.3}.
Since $I_1 (R) > 0$, such an integer $l$ obviously exists.
We put
$r_0 = R$, 
$$
	r_i 
	=
	\sup 
	\{
		r \in (r_{i - 1}, 2 R)
		:
		I_1 (r) \le 2 I_1 (r_{i - 1})
	\},
	\quad
	0 < i \le l - 1,
$$
and  $r_l = 2 R$. It can be seen that
\begin{align}
	I_1 (r_{i+1}) - I_1 (r_i)
	\ge
	{}
	&
	C
	(r_{i+1} - r_i)^{m_1 + \lambda_1 m_2}
	I_1^{\lambda_1 \lambda_2} (r_i)
	g_2^{\lambda_1} (I_1 (r_i))
	\nonumber
	\\
	&
	{}
	\times
	g_1 (\sigma (r_{i+1} - r_i)^{m_2} I_1^{\lambda_2} (r_i) g_2 (I_1 (r_i)))
	\label{PL3.4.1}
\end{align}
for all $0 \le i \le l - 1$.
Indeed, by Lemma~\ref{L3.2}, we have
$$
	I_1 (r_{i+1}) - I_1 (\rho_i)
	\ge
	C 
	(r_{i+1} - \rho_i)^{m_1}
	f_1 (I_2 (\rho_i))
$$
and
$$
	I_2 (\rho_i) - I_2 (r_1)
	\ge
	C 
	(\rho_i - r_i)^{m_2}
	f_2 (I_1 (r_i)),
$$
where $\rho_i = (r_{i+1} + r_i) / 2$. 
This implies the estimate
$$
	I_1 (r_{i+1}) - I_1 (\rho_i)
	\ge
	C 
	(r_{i+1} - \rho_i)^{m_1}
	f_1 (
		\sigma 
		(\rho_i - r_i)^{m_2}
		f_2 (I_1 (r_i))
	),
$$
whence in accordance with~\eqref{1.4} we arrive at~\eqref{PL3.4.1}.

Take a real number $\varepsilon \in (0, 1 / n) \cap (0, \lambda_2 / m_2)$.
We denote by $\Xi_1$ the set of integers $0 \le i \le l - 1$ for which
$
	r_{i+1} - r_i < I_1^{- \varepsilon} (r_i).
$
In so doing, let $\Xi_2$ be the set of all other integers $0 \le i \le l - 1$.
From the definition of real numbers $r_i$, it follows that 
$$
	I_1 (r_i) \le 2^i I_1 (R),
	\quad
	i = 0, \ldots, l - 1;
$$
therefore,
\begin{equation}
	\sum_{i \in \Xi_1}
	(r_{i+1} - r_i)
	<
	\sum_{i \in \Xi_1}
	2^{- \varepsilon i}
	I_1^{- \varepsilon} (R)
	\le
	\frac{
		I_1^{- \varepsilon} (R)
	}{
		1 - 2^{- \varepsilon}
	}.
	\label{PL3.4.2}
\end{equation}
At the same time, taking into account~\eqref{L3.2.3}, we obtain
$$
	I_1^{- \varepsilon} (R)
	=
	\left(
		\frac{
			|B_{2 R}|
		}{
			E_1 (R)
		}
	\right)^\varepsilon
	\le
	\left(
		\frac{
			|B_2|
		}{
			E_1 (R_0)
		}
	\right)^\varepsilon
	R^{\varepsilon n}.
$$
Since $\varepsilon n < 1$, this allows us to assert that
\begin{equation}
	\frac{
		I_1^{- \varepsilon} (R)
	}{
		1 - 2^{- \varepsilon}
	}
	\le
	\frac{R}{2}
	\label{PL3.4.3}
\end{equation}
if $R$ is large enough.
Without loss of generality, one can assume that the real number $R_0$ is chosen to be so large that~\eqref{PL3.4.3} is valid for all $R \ge R_0$.
Thus, combining~\eqref{PL3.4.3} with~\eqref{PL3.4.2}, we arrive at the inequality
\begin{equation}
	\sum_{i \in \Xi_1}
	(r_{i+1} - r_i)
	<
	\frac{R}{2}.
	\label{PL3.4.8}
\end{equation}
This, in turn, yields
\begin{equation}
	\sum_{i \in \Xi_2}
	(r_{i+1} - r_i)
	>
	\frac{R}{2}.
	\label{PL3.4.4}
\end{equation}

We have
$
	I_1 (r_i) \ge I_1 (R)
$
and
$$
	(r_{i+1} - r_i)^{m_2} 
	I_1^{\lambda_2} (r_i)
	\ge
	I_1^{\lambda_2 - \varepsilon m_2} (r_i)
$$
for all $i \in \Xi_2$. Hence,~\eqref{PL3.4.1} implies estimate
\begin{align*}
	I_1 (r_{i+1}) - I_1 (r_i)
	\ge
	{}
	&
	C
	(r_{i+1} - r_i)^{m_1 + \lambda_1 m_2}
	I_1^{\lambda_1 \lambda_2} (r_i)
	g_2^{\lambda_1} (I_1 (r_i))
	\nonumber
	\\
	&
	{}
	\times
	g_1 
	(
		\sigma 
		I_1^{\lambda_2 - \varepsilon m_2} (r_i) 
		g_2 
		(
			I_1 (R)
		)
	)
\end{align*}
or, in other words,
\begin{align}
	\left(
		\frac{
			I_1 (r_{i+1}) - I_1 (r_i)
		}{
			I_1^{\lambda_1 \lambda_2} (r_i)
			g_2^{\lambda_1} (I_1 (r_i))
			g_1 
			(
				\sigma 
				I_1^{\lambda_2 - \varepsilon m_2} (r_i) 
				g_2 
				(
					I_1 (R)
				)
			)
		}
	\right)^{1 / (m_1 + \lambda_1 m_2)}
	\ge
	C (r_{i+1} - r_i)
	\label{PL3.4.5}
\end{align}
for all $i \in \Xi_2$. 
From the definition of the real numbers $r_i$, it follows that $I_1 (r_{i+1}) = 2 I_1 (r_i)$ for all $0 \le i < l - 1$ and, moreover, $I_1 (r_l) \le 2 I_1 (r_{l-1})$. Thus,
\begin{align*}
	&
	\int_{
		I_1 (r_i)
	}^{
		2 I_1 (r_i)
	}
	(
		\zeta^{\lambda_1 \lambda_2}
		g_1 (
			\sigma 
			\zeta^{\lambda_2 - \varepsilon m_2}
			g_2 (I_1 (R))
		)
		g_2^{\lambda_1} (\zeta / 2)
	)^{
		- 1 / (m_1 + \lambda_1 m_2)
	}
	\zeta^{
		1 / (m_1 + \lambda_1 m_2) - 1
	}
	d\zeta
	\\
	&
	\quad
	{}
	\ge
	C
	\left(
		\frac{
			I_1 (r_{i+1}) - I_1 (r_i)
		}{
			I_1^{\lambda_1 \lambda_2} (r_i)
			g_2^{\lambda_1} (I_1 (r_i))
			g_1 
			(
				\sigma 
				I_1^{\lambda_2 - \varepsilon m_2} (r_i) 
				g_2 
				(
					I_1 (R)
				)
			)
		}
	\right)^{1 / (m_1 + \lambda_1 m_2)},
	\quad
	i = 0, \ldots, l - 1,
\end{align*}
whence in accordance with~\eqref{PL3.4.5}, it follows that
$$
	\int_{
		I_1 (r_i)
	}^{
		2 I_1 (r_i)
	}
	(
		\zeta^{\lambda_1 \lambda_2}
		g_1 (
			\sigma 
			\zeta^{\lambda_2 - \varepsilon m_2}
			g_2 (I_1 (R))
		)
		g_2^{\lambda_1} (\zeta / 2)
	)^{
		- 1 / (m_1 + \lambda_1 m_2)
	}
	\zeta^{
		1 / (m_1 + \lambda_1 m_2) - 1
	}
	d\zeta
	\ge
	C (r_{i+1} - r_i)
$$
for all $i \in \Xi_2$. 
Summing this over all $i \in \Xi_2$, we have
\begin{align*}
	&
	\int_{
		E_1 (R) / |B_{2 R}|
	}^\infty
	\left(
		\zeta^{\lambda_1 \lambda_2}
		g_1 
		\left(
			\sigma 
			\zeta^{\lambda_2 - \varepsilon m_2}
			g_2 
			\left(
				\frac{
					E_1 (R)
				}{
					|B_{2 R}|
				}
			\right)
		\right)
		g_2^{\lambda_1} 
		\left(
			\frac{\zeta}{2}
		\right)
	\right)^{
		- 1 / (m_1 + \lambda_1 m_2)
	}
	\zeta^{
		1 / (m_1 + \lambda_1 m_2) - 1
	}
	d\zeta
	\\
	&
	\quad
	{}
	\ge
	C \sum_{i \in \Xi_2} 
	(r_{i+1} - r_i).
\end{align*}
In view of~\eqref{PL3.4.4}, the last estimate yields
\begin{align}
	&
	\int_{
		E_1 (R) / |B_{2 R}|
	}^\infty
	\left(
		\zeta^{\lambda_1 \lambda_2}
		g_1 
		\left(
			\sigma 
			\zeta^{\lambda_2 - \varepsilon m_2}
			g_2 
			\left(
				\frac{
					E_1 (R)
				}{
					|B_{2 R}|
				}
			\right)
		\right)
		g_2^{\lambda_1} 
		\left(
			\frac{\zeta}{2}
		\right)
	\right)^{
		- 1 / (m_1 + \lambda_1 m_2)
	}
	\zeta^{
		1 / (m_1 + \lambda_1 m_2) - 1
	}
	d\zeta
	\nonumber
	\\
	&
	\quad
	{}
	\ge
	C R.
	\label{PL3.4.6}
\end{align}

Assume on the contrary that~\eqref{L3.4.1} is not satisfied. Then there exists a sequence of real numbers $R_i \ge R_0$, $i = 1,2,\ldots$, such that $R_i \to \infty$ as $i \to \infty$ and, moreover,
$$
	\frac{
		E_1 (R_i)
	}{
		|B_{2 R_i}|
	}
	\ge
	\delta
$$
with some real number $\delta > 0$ for all $i = 1,2,\ldots$.
Taking $R = R_i$ in~\eqref{PL3.4.6}, we obtain
\begin{equation}
	\int_\delta^\infty
	(
		\zeta^{\lambda_1 \lambda_2}
		g_1 (
			\sigma 
			\zeta^{\lambda_2 - \varepsilon m_2}
			g_2 (\delta)
		)
		g_2^{\lambda_1} (\zeta / 2)
	)^{
		- 1 / (m_1 + \lambda_1 m_2)
	}
	\zeta^{
		1 / (m_1 + \lambda_1 m_2) - 1
	}
	d\zeta
	\ge
	C R_i
	\label{PL3.4.7}
\end{equation}
for all $i = 1,2,\ldots$.
From~\eqref{1.6}, it follows that
$$
	g_1 (
		\sigma 
		\zeta^{\lambda_2 - \varepsilon m_2}
		g_2 (\delta)
	)
	g_2^{\lambda_1} (\zeta / 2)
	\ge
	C
	g_1 (\zeta)
	g_2^{\lambda_1} (\zeta)
$$
for all sufficiently large $\zeta > 0$.
Thus, according to~\eqref{T2.1.1}, we have
$$
	\int_\delta^\infty
	(
		\zeta^{\lambda_1 \lambda_2}
		g_1 (
			\sigma 
			\zeta^{\lambda_2 - \varepsilon m_2}
			g_2 (\delta)
		)
		g_2^{\lambda_1} (\zeta / 2)
	)^{
		- 1 / (m_1 + \lambda_1 m_2)
	}
	\zeta^{
		1 / (m_1 + \lambda_1 m_2) - 1
	}
	d\zeta
	<
	\infty.
$$
This obviously contradicts~\eqref{PL3.4.7}.
\end{proof}

\begin{Lemma}\label{L3.5}
Let $u_1$ and $u_2$ be non-trivial solutions of~\eqref{1.1}. If~\eqref{R2.1.1} is valid, then
$$
	\lim_{R \to \infty}
	\frac{
		1
	}{
		R^n
	}
	\int_{
		B_R
	}
	|u_2|
	\,
	dx
	=
	0.
$$
\end{Lemma}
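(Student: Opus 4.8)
The plan is to deduce Lemma~\ref{L3.5} from Lemma~\ref{L3.4} by the symmetry of system~\eqref{1.1}, exactly as anticipated in Remark~\ref{R2.1}. The system~\eqref{1.1} — together with the defining integral inequalities~\eqref{1.2}, \eqref{1.3} and all the structural hypotheses — is invariant under the simultaneous interchange
$$
	(u_1, m_1, a_\alpha, f_1, \lambda_1, \mu_1, g_1, h_1)
	\;\longleftrightarrow\;
	(u_2, m_2, b_\alpha, f_2, \lambda_2, \mu_2, g_2, h_2):
$$
inequality~\eqref{1.2} becomes~\eqref{1.3} and conversely, conditions~\eqref{1.4} and~\eqref{1.6} are stated symmetrically in $i = 1,2$, and swapping the two lines of~\eqref{1.1} and relabeling converts a non-trivial solution $(u_1, u_2)$ into a non-trivial solution $(u_2, u_1)$ of a system of the same form with the data interchanged. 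Under this interchange, hypothesis~\eqref{T2.1.1} of Lemma~\ref{L3.4} turns precisely into~\eqref{R2.1.1}, and the conclusion~\eqref{L3.4.1} turns into the assertion of Lemma~\ref{L3.5}. Hence Lemma~\ref{L3.5} follows at once from Lemma~\ref{L3.4}.

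If a self-contained argument is preferred, I would transcribe the proof of Lemma~\ref{L3.4} with the indices $1$ and $2$ exchanged throughout. Concretely: extend $g_2$ to $(0, \infty)$ so that~\eqref{1.4} holds for $i = 2$ on all of $(0, \infty)$; set $E_i(r) = \int_{B_r} |u_i|\,dx$ and note that $E_1, E_2$ are eventually positive, since the vanishing of one $u_i$ forces the vanishing of the other; for large $R$ build the dyadic partition $R = r_0 < r_1 < \dots < r_l = 2R$ with $I_2(r_{i+1}) = 2 I_2(r_i)$ for $i < l - 1$. Applying Lemma~\ref{L3.2} first via~\eqref{L3.2.2} and then via~\eqref{L3.2.1} yields the analogue of~\eqref{PL3.4.1},
$$
	I_2(r_{i+1}) - I_2(r_i)
	\ge
	C (r_{i+1} - r_i)^{m_2 + \lambda_2 m_1}
	I_2^{\lambda_1 \lambda_2}(r_i)
	g_1^{\lambda_2}(I_2(r_i))
	g_2\bigl( \sigma (r_{i+1} - r_i)^{m_1} I_2^{\lambda_1}(r_i) g_1(I_2(r_i)) \bigr).
$$
Splitting $\{0, \dots, l-1\}$ into $\Xi_1 = \{i : r_{i+1} - r_i < I_2^{-\varepsilon}(r_i)\}$ and its complement $\Xi_2$, with $\varepsilon \in (0, 1/n) \cap (0, \lambda_1 / m_1)$, the bound $I_2(r_i) \le 2^i I_2(R)$ together with $\varepsilon n < 1$ gives $\sum_{i \in \Xi_1}(r_{i+1} - r_i) < R/2$ for $R$ large, hence $\sum_{i \in \Xi_2}(r_{i+1} - r_i) > R/2$. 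Integrating the chain estimate over $\Xi_2$ exactly as in~\eqref{PL3.4.5}--\eqref{PL3.4.6} yields
$$
	\int_{E_2(R)/|B_{2R}|}^\infty
	\Bigl(
		\zeta^{\lambda_1 \lambda_2}
		g_2\bigl( \sigma \zeta^{\lambda_1 - \varepsilon m_1} g_1(E_2(R)/|B_{2R}|) \bigr)
		g_1^{\lambda_2}(\zeta/2)
	\Bigr)^{-1/(m_2 + \lambda_2 m_1)}
	\zeta^{1/(m_2 + \lambda_2 m_1) - 1}\,d\zeta
	\ge
	C R.
$$
If the conclusion failed there would be $R_i \to \infty$ with $E_2(R_i)/|B_{2R_i}| \ge \delta > 0$; using~\eqref{1.6} for $i = 2$ to absorb the inner factor $g_2(\sigma \zeta^{\lambda_1 - \varepsilon m_1} g_1(\delta)) g_1^{\lambda_2}(\zeta/2)$ against $g_2(\zeta) g_1^{\lambda_2}(\zeta)$, and then~\eqref{R2.1.1} to see the left-hand side is finite, contradicts $R_i \to \infty$.

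I do not expect any genuine obstacle here: the whole content is the observation that every ingredient driving the proof of Lemma~\ref{L3.4} — the bound $|a_\alpha(x,\zeta)| + |b_\alpha(x,\zeta)| \le A |\zeta|$, the monotonicity and convexity of $f_1$ and $f_2$, and conditions~\eqref{1.4} and~\eqref{1.6} — is completely symmetric in the index $i$, so no new estimate is needed. Accordingly I would present the one-line deduction from Lemma~\ref{L3.4} and merely note that the direct repetition is routine.
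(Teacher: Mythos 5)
Your proposal is correct and matches the paper, which itself disposes of Lemma~\ref{L3.5} with the single remark that the proof is similar to that of Lemma~\ref{L3.4}; your index-swapped transcription (exponent $m_2 + \lambda_2 m_1$, $\varepsilon \in (0,1/n) \cap (0,\lambda_1/m_1)$, condition~\eqref{R2.1.1} replacing~\eqref{T2.1.1}) is exactly the intended argument.
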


The proof is similar to the proof of Lemma~\ref{L3.4}.

\begin{Lemma}\label{L3.6}
Let $u_1$ and $u_2$ be solutions of~\eqref{1.1}. Then
\begin{equation}
	J_1 (r_2) - J_1 (r_1)
	\ge
	C (r_2^n - r_1^n)
	f_2 
	\left(
		\frac{
			\sigma
			(r_2 - r_1)^{m_1}
		}{
			r_2^n - r_1^n
		}
		J_2 (r_1)
	\right)
	\label{L3.6.1}
\end{equation}
and
\begin{equation}
	J_2 (r_2) - J_2 (r_1)
	\ge
	C (r_2^n - r_1^n)
	f_1 
	\left(
		\frac{
			\sigma
			(r_2 - r_1)^{m_2}
		}{
			r_2^n - r_1^n
		}
		J_1 (r_1)
	\right)
	\label{L3.6.2}
\end{equation}
for all real numbers $0 < r_1 < r_2$, where
\begin{equation}
	J_1 (r)
	=
	\int_{B_r}
	f_2 (|u_1|)
	\,
	dx
	\quad
	\mbox{and}
	\quad
	J_2 (r)
	=
	\int_{B_r}
	f_1 (|u_2|)
	\,
	dx.
	\label{L3.6.3}
\end{equation}
\end{Lemma}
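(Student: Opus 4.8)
The plan is to deduce~\eqref{L3.6.1} and~\eqref{L3.6.2} from Lemma~\ref{L3.1} by applying Jensen's inequality over the annulus $B_{r_2}\setminus B_{r_1}$. First I would arrange a form of Lemma~\ref{L3.1} free of the doubling restriction $r_2\le 2r_1$. Given arbitrary real numbers $0<r_1<r_2$, put $\rho=\max\{r_1,r_2/2\}$; then $r_1\le\rho<r_2$, $r_2\le 2\rho$, and $r_2-\rho\ge(r_2-r_1)/2$. Applying~\eqref{L3.1.1} with $r_1$ replaced by $\rho$, using that $B_{r_2}\setminus B_\rho\subset B_{r_2}\setminus B_{r_1}$ while the function $r\mapsto\int_{B_r}f_1(|u_2|)\,dx$ is non-decreasing, one obtains
$$
	\int_{B_{r_2}\setminus B_{r_1}}|u_1|\,dx
	\ge
	C(r_2-\rho)^{m_1}\int_{B_\rho}f_1(|u_2|)\,dx
	\ge
	C(r_2-r_1)^{m_1}\int_{B_{r_1}}f_1(|u_2|)\,dx
	=
	C(r_2-r_1)^{m_1}J_2(r_1)
$$
for all $0<r_1<r_2$ (the factor $2^{-m_1}$ arising from $r_2-\rho\ge(r_2-r_1)/2$ being absorbed into $C$). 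In exactly the same manner one gets $\int_{B_{r_2}\setminus B_{r_1}}|u_2|\,dx\ge C(r_2-r_1)^{m_2}J_1(r_1)$.

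Next, since $f_2$ is non-negative, non-decreasing and convex and the volume of $B_{r_2}\setminus B_{r_1}$ equals $|B_1|(r_2^n-r_1^n)$, Jensen's inequality gives
$$
	J_1(r_2)-J_1(r_1)
	=
	\int_{B_{r_2}\setminus B_{r_1}}f_2(|u_1|)\,dx
	\ge
	|B_1|(r_2^n-r_1^n)
	\, f_2\left(
		\frac{1}{|B_1|(r_2^n-r_1^n)}
		\int_{B_{r_2}\setminus B_{r_1}}|u_1|\,dx
	\right).
$$
Substituting here the lower bound for $\int_{B_{r_2}\setminus B_{r_1}}|u_1|\,dx$ from the first step and using the monotonicity of $f_2$ once more, one arrives at~\eqref{L3.6.1} with $C=|B_1|$ and $\sigma$ the constant produced in the first step divided by $|B_1|$. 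Interchanging the two inequalities and the unknown functions in~\eqref{1.1}, that is, repeating the argument with $f_1$, $m_2$, $u_2$ and $J_2$ in the roles of $f_2$, $m_1$, $u_1$ and $J_1$, one obtains~\eqref{L3.6.2}.

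Apart from the first step, everything is a direct application of Jensen's inequality and monotonicity. The point that really needs care is the removal of the doubling restriction in Lemma~\ref{L3.1}: it is precisely the elementary estimate $r_2-\rho\ge(r_2-r_1)/2$, together with the monotonicity of $r\mapsto\int_{B_r}f_1(|u_2|)\,dx$ (which lets one pass from $B_\rho$ back down to $B_{r_1}$), that preserves the full power $(r_2-r_1)^{m_1}$ in the final estimate rather than a weaker quantity.
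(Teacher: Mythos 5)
Your proof is correct and follows essentially the same route as the paper: formula~\eqref{L3.1.1} of Lemma~\ref{L3.1}, Jensen's inequality over the annulus $B_{r_2}\setminus B_{r_1}$, and the monotonicity of $f_2$ (resp.\ $f_1$). In fact you are more careful than the paper, which applies Lemma~\ref{L3.1} without comment for all $0<r_1<r_2$ despite its stated restriction $r_2\le 2r_1$; your $\rho=\max\{r_1,r_2/2\}$ device correctly removes that restriction (and is harmless, since in the paper's later applications one always has $r_2\le 2r_1$ anyway).
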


\begin{proof}
Taking into account formula~\eqref{L3.1.1} of Lemma~\ref{L3.1}, we have
$$
	\frac{
		1
	}{
		|B_{r_2} \setminus B_{r_1}|
	}
	\int_{
		B_{r_2}
		\setminus
		B_{r_1}
	}
	|u_1|
	\,
	dx
	\ge
	\frac{
		C
		(r_2 - r_1)^{m_1}
	}{
		|B_{r_2} \setminus B_{r_1}|
	}
	\int_{
		B_{r_1}
	}
	f_1 (|u_2|)
	\,
	dx.
$$
Since $f_2$ is a non-decreasing function, this implies that
$$
	f_2
	\left(
		\frac{
			1
		}{
			|B_{r_2} \setminus B_{r_1}|
		}	
		\int_{
			B_{r_2}
			\setminus
			B_{r_1}
		}
		|u_1|
		\,
		dx
	\right)
	\ge
	f_2
	\left(
		\frac{
			C
			(r_2 - r_1)^{m_1}
		}{
			|B_{r_2} \setminus B_{r_1}|
		}
		\int_{
			B_{r_1}
		}
		f_1 (|u_2|)
		\,
		dx
	\right),
$$
whence in accordance with the inequality
$$
	\frac{
		1
	}{
		|B_{r_2} \setminus B_{r_1}|
	}	
	\int_{
		B_{r_2}
		\setminus
		B_{r_1}
	}
	f_2 (|u_1|)
	\,
	dx
	\ge
	f_2
	\left(
		\frac{
			1
		}{
			|B_{r_2} \setminus B_{r_1}|
		}	
		\int_{
			B_{r_2}
			\setminus
			B_{r_1}
		}
		|u_1|
		\,
		dx
	\right)
$$
which is valid as $f_2$ is a convex function, it follows that
$$
	\frac{
		1
	}{
		|B_{r_2} \setminus B_{r_1}|
	}	
	\int_{
		B_{r_2}
		\setminus
		B_{r_1}
	}
	f_2 (|u_1|)
	\,
	dx
	\ge
	f_2
	\left(
		\frac{
			C
			(r_2 - r_1)^{m_1}
		}{
			|B_{r_2} \setminus B_{r_1}|
		}
		\int_{
			B_{r_1}
		}
		f_1 (|u_2|)
		\,
		dx
	\right).
$$
The last estimate is obviously equivalent to~\eqref{L3.6.1}.
In its turn, repeating the above argument with~\eqref{L3.1.1} replaced by~\eqref{L3.1.2}, we obtain~\eqref{L3.6.2}.
\end{proof}

\begin{Lemma}\label{L3.7}
Let $u_1$ and $u_2$ be non-trivial solutions of~\eqref{1.1} and, moreover,~\eqref{T2.1.1} hold. 
If $\mu_1 \mu_2 > 1$, then 
\begin{equation}
	J_1^{1 - \mu_1 \mu_2} (R)
	-
	J_1^{1 - \mu_1 \mu_2} (2 R)
	\ge
	C
	R^{n + m_1 \mu_2 - (n - m_2) \mu_1 \mu_2}
	h_1^{\mu_2} 
	\left(
		\frac{1}{R}
	\right)
	h_2
	\left(
		\frac{1}{R}
	\right)
	\label{L3.7.1}
\end{equation}
for all sufficiently large $R > 0$,
where the function $J_1$ is defined by~\eqref{L3.6.3}.
\end{Lemma}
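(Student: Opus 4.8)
The plan is to fold the coupled pair of inequalities in Lemma~\ref{L3.6} into a single autonomous recursion for $J_1$ on the scale $[R,2R]$, and then to ``integrate'' it. Fix a non-trivial pair $u_1,u_2$; by the definition of a solution neither function vanishes identically, so $J_1$ is positive and non-decreasing and $J_1(\rho_0)>0$ for some fixed $\rho_0>0$, while Lemma~\ref{L3.4} gives $R^{-n}\int_{B_R}|u_1|\,dx\to 0$. Exactly as in the proof of Lemma~\ref{L3.4} I extend $h_1,h_2$ to all of $(0,\infty)$, replacing $h_i(\zeta)$ for $\zeta\ge1$ by $\min\{h_i(1),\inf_{s\in[1,\zeta]}f_i(s)/s^{\mu_i}\}$, so that~\eqref{1.5} holds on $(0,\infty)$ while $h_i$ stays continuous and non-decreasing. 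For $R\le r<\rho<r'\le 2R$ I first use~\eqref{L3.6.2} on the pair $(r,\rho)$ and monotonicity of $J_2$ to get $J_2(\rho)\ge J_2(\rho)-J_2(r)\ge C(\rho^n-r^n)f_1\!\big(\sigma(\rho-r)^{m_2}(\rho^n-r^n)^{-1}J_1(r)\big)$, and then~\eqref{L3.6.1} on $(\rho,r')$ together with $J_1(r')-J_1(r)\ge J_1(r')-J_1(\rho)$. Substituting~\eqref{1.5} into both applications and using that, for $R\le r,\rho,r'\le 2R$, the increments $\rho^n-r^n$ and $r'^n-\rho^n$ are comparable to $R^{n-1}(\rho-r)$ and $R^{n-1}(r'-\rho)$, a short computation yields
\[
	J_1(r')-J_1(r)
	\ge
	C\,(r'-r)^{q}\,R^{(n-1)(1-\mu_1\mu_2)}\,J_1(r)^{\mu_1\mu_2}\,
	h_1^{\mu_2}(\xi_1)\,h_2(\xi_2),
\]
with $q=1+m_1\mu_2+(m_2-1)\mu_1\mu_2$, where $\xi_1\asymp\sigma(r'-r)^{m_2-1}R^{1-n}J_1(r)$ and $\xi_2\asymp\sigma(r'-r)^{m_1-1}R^{1-n}J_2(\rho)$ are the arguments produced by~\eqref{1.5}.

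The second step is to replace $h_1(\xi_1)$ and $h_2(\xi_2)$ by $h_1(1/R)$ and $h_2(1/R)$. Here one exploits that $J_1(r)\ge J_1(\rho_0)>0$, the polynomial a priori bound $J_1(R)\le CR^{n-m_2}$ coming from Lemma~\ref{L3.1}, Lemma~\ref{L3.4} and the elementary estimate $f_1(\zeta)\ge g_1(1)\zeta$ on $[1,\infty)$, and the lower bound $h_i(1/R)\ge C\log^{-\sigma}R$ of Lemma~\ref{L3.3}: as soon as the gap $r'-r$ is bounded below by a fixed negative power of $R$, the lower bound for $J_2(\rho)$ furnished by the first display forces $R^{-c}\le\xi_i\le C$ with a fixed $c$, whence, by monotonicity and~\eqref{1.7} applied with $\varkappa=1/c$, one has $h_i(\xi_i)\ge h_i(R^{-c})\ge C h_i(1/R)$, the remaining case $\xi_i\ge1$ being handled directly by monotonicity of $f_i$ together with $\xi_i\le C$. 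Thus, for all sufficiently large $R$ and for $r,\rho,r'$ with $r'-r$ not too small,
\[
	J_1(r')-J_1(r)
	\ge
	C\,(r'-r)^{q}\,R^{(n-1)(1-\mu_1\mu_2)}\,J_1(r)^{\mu_1\mu_2}\,
	h_1^{\mu_2}\!\left(\tfrac1R\right)h_2\!\left(\tfrac1R\right).
\]

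To pass from this recursion to~\eqref{L3.7.1} I would partition $[R,2R]$ by a doubling procedure in $J_1$, precisely as the points $r_i$ are chosen in the proof of Lemma~\ref{L3.4}: $r_0=R$, $r_i=\sup\{r\in(r_{i-1},2R):J_1(r)\le 2J_1(r_{i-1})\}$, $r_l=2R$ with $l$ minimal so that $2^lJ_1(R)\ge J_1(2R)$. Since $J_1(r_{i+1})\le 2J_1(r_i)$, the elementary inequality $a^{1-\mu_1\mu_2}-b^{1-\mu_1\mu_2}\ge(\mu_1\mu_2-1)b^{-\mu_1\mu_2}(b-a)\ge(\mu_1\mu_2-1)2^{-\mu_1\mu_2}a^{-\mu_1\mu_2}(b-a)$ turns each step of the recursion (with $\rho=(r_i+r_{i+1})/2$) into
\[
	J_1^{1-\mu_1\mu_2}(r_i)-J_1^{1-\mu_1\mu_2}(r_{i+1})
	\ge
	C\,(r_{i+1}-r_i)^{q}\,R^{(n-1)(1-\mu_1\mu_2)}\,
	h_1^{\mu_2}\!\left(\tfrac1R\right)h_2\!\left(\tfrac1R\right).
\]
Summing over $i$ telescopes the left-hand side to $J_1^{1-\mu_1\mu_2}(R)-J_1^{1-\mu_1\mu_2}(2R)$, and, since $(n-1)(1-\mu_1\mu_2)+q=n+m_1\mu_2-(n-m_2)\mu_1\mu_2$, it remains only to bound $\sum_i(r_{i+1}-r_i)^{q}$ from below by $CR^{q}$: one discards the indices with $r_{i+1}-r_i$ smaller than a suitable negative power of $J_1(r_i)$ (which, because $J_1(r_i)=2^iJ_1(R)\ge 2^iJ_1(\rho_0)$, have total length $<R/2$ for large $R$, as in Lemma~\ref{L3.4}), so that the surviving gaps sum to more than $R/2$ and at the same time keep all the arguments $\xi_i$ of the previous step under control.

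The main obstacle is precisely this last, combinatorial, part: one must arrange the partition so that $\sum_i(r_{i+1}-r_i)^{q}$ is genuinely of order $R^{q}$, which requires a careful control of the number $l$ of subintervals — equivalently, of how fast $J_1$ can grow across $[R,2R]$ — and of the gaps that have to be discarded. It is here, and in keeping $\xi_1,\xi_2$ bounded below by a fixed power of $R$, that the full strength of hypothesis~\eqref{T2.1.1} is spent, through Lemma~\ref{L3.4} and the a priori bound on $J_1$ derived from it; once this is in place, the passage $h_i(\xi_i)\rightsquigarrow h_i(1/R)$ via~\eqref{1.7} and Lemma~\ref{L3.3} and the final telescoping are routine.
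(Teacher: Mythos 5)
Your derivation of the per-step inequality
\begin{equation*}
	J_1 (r_{i+1}) - J_1 (r_i)
	\ge
	C (r_{i+1} - r_i)^{q}\,
	R^{(n-1)(1 - \mu_1 \mu_2)}\,
	J_1^{\mu_1 \mu_2} (r_i)\,
	h_1^{\mu_2} \left( \tfrac1R \right)
	h_2 \left( \tfrac1R \right),
	\qquad
	q = 1 + m_1 \mu_2 + (m_2 - 1) \mu_1 \mu_2,
\end{equation*}
for the surviving indices is exactly the paper's inequality~\eqref{PL3.7.8}, and the preparation matches the paper as well: Lemma~\ref{L3.6} applied twice through the midpoint $\rho_i$, the use of~\eqref{1.5} justified by the smallness of the arguments (which is where~\eqref{T2.1.1} enters, via Lemma~\ref{L3.4} and the decay $J_1(R)=o(R^{n-m_2})$, cf.~\eqref{PL3.7.12}), the passage to $h_i(1/R)$ via~\eqref{1.7} and Lemma~\ref{L3.3}, and the discarding of indices with $r_{i+1}-r_i < R^{\varepsilon} J_1^{-\varepsilon}(r_i)$ so that the remaining gaps total at least $R/2$. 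The genuine gap is the step you yourself flag as the ``main obstacle'': after telescoping $J_1^{1-\mu_1\mu_2}$ you still need $\sum_{i\in\Xi_2}(r_{i+1}-r_i)^{q}\ge C R^{q}$, and this cannot be extracted from $\sum_{i\in\Xi_2}(r_{i+1}-r_i)\ge R/2$. Since $q>1$, if $\Xi_2$ consists of roughly $l$ comparable gaps of size $R/l$ the sum is only of order $R^{q} l^{1-q}$, and the number $l$ of doubling steps is controlled only by $l\le C\log R$ (from $J_1(2R)\le C R^{\,n-m_2}$ and $J_1(R)\ge J_1(R_0)>0$). The best your route gives (power--mean inequality plus $l\le C\log R$) is~\eqref{L3.7.1} weakened by a factor $\log^{-(q-1)} R$, and that loss is not harmless: in the proof of Theorem~\ref{T2.1} the divergence of the integral in~\eqref{T2.1.2} would no longer force the divergence of the weakened dyadic series, so the lemma in that form does not serve its purpose (the sharpness seen in Example~\ref{E2.2} lives exactly at logarithmic scale).

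The paper closes this gap by not telescoping $J_1^{1-\mu_1\mu_2}$ at all. It raises each per-step inequality to the power $1/q$, so that the right-hand side becomes \emph{linear} in the gap, sums over $i\in\Xi_2$, $i\le l-2$, and uses $\sum (r_{i+1}-r_i)\ge R/4$ on one side; on the other side the doubling relation $J_1(r_{i+1})=2J_1(r_i)$ makes the summands equal to $J_1^{(1-\mu_1\mu_2)/q}(r_i)=2^{\,i(1-\mu_1\mu_2)/q}J_1^{(1-\mu_1\mu_2)/q}(R)$, a geometric series dominated by its first term because $1-\mu_1\mu_2<0$. This yields the pointwise bound $J_1^{1-\mu_1\mu_2}(R)\ge C R^{\,n+m_1\mu_2-(n-m_2)\mu_1\mu_2}\,h_1^{\mu_2}(1/R)\,h_2(1/R)$ with no loss in the number of steps, and the difference estimate~\eqref{L3.7.1} then follows from $J_1(2R)\ge 2J_1(R)$ (valid since $l\ge2$ by~\eqref{PL3.7.9}); the case where the single last gap $r_l-r_{l-1}$ already exceeds $R/4$ is treated separately by the elementary integral comparison you also use. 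So the missing idea is precisely this ``take $q$-th roots, sum linearly in the gaps, and exploit the geometric decay of $J_1^{(1-\mu_1\mu_2)/q}(r_i)$'' device; without it your final combinatorial step either fails outright or loses a logarithm that the application cannot absorb.
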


\begin{proof}
Our reasoning is in many ways similar to that given in the proof of Lemma~\ref{L3.6}.
Take a real number $R > 0$ such that $J_1 (R) > 0$.
Since $u_1$ and $u_2$ are nontrivial solutions~\eqref{1.1}, the last inequality must obviously hold for all $R$ in a neighborhood of infinity. Assume further that $l$ is the minimal positive integer satisfying the condition $2^l J_1 (R) \ge J_1 (2 R)$.
We put
$r_0 = R$, 
$$
	r_i 
	=
	\sup 
	\{
		r \in (r_{i - 1}, 2 R)
		:
		J_1 (r) \le 2 J_1 (r_{i - 1})
	\},
	\quad
	0 < i \le l - 1,
$$
and  $r_l = 2 R$. 
Let us show that
\begin{align}
	&
	J_1 (r_{i+1}) - J_1 (r_i)
	\ge
	C
	(r_{i+1} - r_i)^{1 + m_1 \mu_2 + (m_2 - 1) \mu_1 \mu_2}
	r_i^{(1 - \mu_1 \mu_2)(n - 1)}
	\nonumber
	\\
	&
	\quad
	{}
	\times
	h_2
	\left(
		\sigma
		(r_{i+1} - r_i)^{m_1 + m_2 \mu_1 - 1}
		\left(
			\frac{
				J_1 (r_i)
			}{
				r_i^{n - 1}
			}
		\right)^{\mu_1}
		h_1
		\left(
			\sigma
			(r_{i+1} - r_i)^{m_2 - 1}
			\frac{
				J_1 (r_i)
			}{
				r_i^{n - 1}
			}
		\right)
	\right)
	\nonumber
	\\
	&
	\quad
	{}
	\times
	h_1^{\mu_2}
	\left(
		\sigma
		(r_{i+1} - r_i)^{m_2 - 1}
		\frac{
			J_1 (r_i)
		}{
			r_i^{n - 1}
		}
	\right)
	J_1^{\mu_1 \mu_2} (r_i)
	\label{PL3.7.1}
\end{align}
for all $0 \le i \le l - 1$.
Indeed, by Lemma~\ref{L3.6}, we have
\begin{equation}
	J_1 (r_{i+1}) - J_1 (\rho_i)
	\ge
	C 
	(r_{i+1}^n - \rho_i^n)
	f_2 
	\left(
		\frac{
			\sigma
			(r_{i+1} - \rho_i)^{m_1}
		}{
			r_{i+1}^n - \rho_i^n
		}
		J_2 (\rho_i)
	\right)
	\label{PL3.7.2}
\end{equation}
and
\begin{equation}
	J_2 (\rho_i) - J_2 (r_i)
	\ge
	C 
	(\rho_i^n - r_i^n)
	f_1 
	\left(
		\frac{
			\sigma
			(\rho_i - r_i)^{m_2}
		}{
			\rho_i^n - r_i^n
		}
		J_1 (r_i)
	\right)
	\label{PL3.7.3}
\end{equation}
for all $0 \le i \le l - 1$,
where $\rho_i = (r_{i+1} + r_i) / 2$.

At the same time, applying Lemma~\ref{L3.1} with $r_2 = 4 R$ and $r_1 = 2 R$, we obtain
\begin{equation}
	\int_{
		B_{4 R}
		\setminus
		B_{2 R}
	}
	|u_1|
	\,
	dx
	\ge
	C
	R^{m_1}
	\int_{
		B_{2 R}
	}
	f_1 (|u_2|)
	\,
	dx,
	\label{PL3.7.4}
\end{equation}
whence it follows that
$$
	\frac{1}{R^n}
	\int_{
		B_{4 R}
		\setminus
		B_{2 R}
	}
	|u_1|
	\,
	dx
	\ge
	\frac{
		C
	}{
			R^{n - m_1}
	}
	\int_{
		B_{2 R}
	}
	f_1 (|u_2|)
	\,
	dx
	\ge
	\frac{
		\sigma
		(r_{i+1} - \rho_i)^{m_1}
	}{
		r_{i+1}^n - \rho_i^n
	}
	J_2 (\rho_i)
$$
for all $0 \le i \le l - 1$.
In view of Lemma~\ref{L3.4},
\begin{equation}
	\lim_{R \to \infty}
	\frac{
		1
	}{
		R^n
	}
	\int_{
		B_{4 R}
		\setminus
		B_{2 R}
	}
	|u_1|
	\,
	dx
	=
	0;
	\label{PL3.7.5}
\end{equation}
therefore, if the real number $R > 0$ is large enough, 
then in accordance with~\eqref{1.5} we can estimate the last factor on the right in~\eqref{PL3.7.2} as follows:
$$
	f_2 
	\left(
		\frac{
			\sigma
			(r_{i+1} - \rho_i)^{m_1}
		}{
			r_{i+1}^n - \rho_i^n
		}
		J_2 (\rho_i)
	\right)
	\ge
	C
	\left(
		\frac{
			(r_{i+1} - \rho_i)^{m_1}
		}{
			r_{i+1}^n - \rho_i^n
		}
		J_2 (\rho_i)
	\right)^{\mu_2}
	h_2
	\left(
		\frac{
			\sigma
			(r_{i+1} - \rho_i)^{m_1}
		}{
			r_{i+1}^n - \rho_i^n
		}
		J_2 (\rho_i)
	\right).
$$
Hence,~\eqref{PL3.7.2} implies the inequality
$$
	J_1 (r_{i+1}) - J_1 (\rho_i)
	\ge
	C 
	(r_{i+1}^n - \rho_i^n)
	\left(
		\frac{
			(r_{i+1} - \rho_i)^{m_1}
		}{
			r_{i+1}^n - \rho_i^n
		}
		J_2 (\rho_i)
	\right)^{\mu_2}
	h_2
	\left(
		\frac{
			\sigma
			(r_{i+1} - \rho_i)^{m_1}
		}{
			r_{i+1}^n - \rho_i^n
		}
		J_2 (\rho_i)
	\right)
$$
for all $0 \le i \le l - 1$.
Combining this with~\eqref{PL3.7.3}, we have
\begin{align}
	J_1 (r_{i+1}) - J_1 (\rho_i)
	\ge
	{}
	&
	C 
	(r_{i+1}^n - \rho_i^n)
	\left(
		(r_{i+1} - \rho_i)^{m_1}
		f_1 
		\left(
			\frac{
				\sigma
				(\rho_i - r_i)^{m_2}
			}{
				\rho_i^n - r_i^n
			}
			J_1 (r_i)
		\right)
	\right)^{\mu_2}
	\nonumber
	\\
	&
	{}
	\times
	h_2
	\left(
		\sigma
		(r_{i+1} - \rho_i)^{m_1}
		f_1 
		\left(
			\frac{
				\sigma
				(\rho_i - r_i)^{m_2}
			}{
				\rho_i^n - r_i^n
			}
			J_1 (r_i)
		\right)
	\right)
	\label{PL3.7.6}
\end{align}
for all $0 \le i \le l - 1$.
By~\eqref{PL3.7.4} and~\eqref{PL3.7.5}, 
$$
	\lim_{R \to \infty}
	\frac{
		J_2 (R)
	}{
		R^{n - m_1}
	}
	=
	0;
$$
therefore, applying formula~\eqref{L3.6.2} of Lemma~\ref{L3.6} with $r_1 = R$ and $r_2 = 2 R$, we arrive in the limit as $R \to \infty$ at the relation
\begin{equation}
	\lim_{R \to \infty}
	\frac{
		J_1 (R)
	}{
		R^{n - m_2}
	}
	=
	0.
	\label{PL3.7.12}
\end{equation}
Thus, if the real number $R > 0$ is large enough, then condition~\eqref{1.5} and the obvious inequality 
$$
	\frac{
		J_1 (2 R)
	}{
		(2 R)^{n - m_2}
	}
	\ge
	C
	\frac{
		(\rho_i - r_i)^{m_2}
	}{
		\rho_i^n - r_i^n
	}
	J_1 (r_i)
$$
allow one to assert that
$$
	f_1 
	\left(
		\frac{
			\sigma
			(\rho_i - r_i)^{m_2}
		}{
			\rho_i^n - r_i^n
		}
		J_1 (r_i)
	\right)
	\ge
	C
	\left(
		\frac{
			\sigma
			(\rho_i - r_i)^{m_2}
		}{
			\rho_i^n - r_i^n
		}
		J_1 (r_i)
	\right)^{\mu_1}
	h_1
	\left(
		\frac{
			\sigma
			(\rho_i - r_i)^{m_2}
		}{
			\rho_i^n - r_i^n
		}
		J_1 (r_i)
	\right)
$$
for all $0 \le i \le l - 1$.
Combining this with~\eqref{PL3.7.6}, we obtain~\eqref{PL3.7.1}.

Let us take a real number $\varepsilon \in (0, 1 / (m_1 + m_2 \mu_1 - 1))$.
We denote by $\Xi_1$ the set of integers $0 \le i \le l - 1$ such that
$
	r_{i+1} - r_i < R^\varepsilon J_1^{- \varepsilon} (r_i).
$
Also let $\Xi_2$ be the set of all other integers $0 \le i \le l - 1$.
It is obvious that
$$
	\sum_{i \in \Xi_1}
	(r_{i+1} - r_i)
	<
	R^\varepsilon
	\sum_{i=0}^{l-1}
	2^{-\varepsilon i}
	J_1^{- \varepsilon} (R)
	<
	\frac{
		2 
		R^\varepsilon 
		J_1^{- \varepsilon} (R)
	}{
		1 - 2^{-\varepsilon}
	}.
$$
Since $\varepsilon < 1$ and $J_1$ is a non-decreasing function, this implies inequality~\eqref{PL3.4.8}
for all $R > 0$ in a neighborhood of infinity. Hence, assuming that the real number $R > 0$ is large enough, one can assert that~\eqref{PL3.4.4} holds.

At the same time, according to~\eqref{PL3.7.1}, we have
\begin{align}
	&
	J_1 (r_{i+1}) - J_1 (r_i)
	\ge
	C
	(r_{i+1} - r_i)^{1 + m_1 \mu_2 + (m_2 - 1) \mu_1 \mu_2}
	R^{(1 - \mu_1 \mu_2)(n - 1)}
	\nonumber
	\\
	&
	\quad
	{}
	\times
	h_2
	\left(
		\sigma
		J_1^{\mu_1 - \varepsilon (m_1 + m_2 \mu_1 - 1)} (r_i)
		R^{
			\varepsilon (m_1 + m_2 \mu_1 - 1) - (n - 1) \mu_1
		}
		h_1
		\left(
			\sigma
			J_1^{1 - \varepsilon (m_2 - 1)} (r_i)
			R^{\varepsilon (m_2 - 1) - n + 1}
		\right)
	\right)
	\nonumber
	\\
	&
	\quad
	{}
	\times
	h_1^{\mu_2}
	\left(
		\sigma
		J_1^{1 - \varepsilon (m_2 - 1)} (r_i)
		R^{\varepsilon (m_2 - 1) - n + 1}
	\right)
	J_1^{\mu_1 \mu_2} (r_i)
	\label{PL3.7.7}
\end{align}
for all $i \in \Xi_2$.
In the case where the real number $R > 0$ is large enough, from condition~\eqref{1.7} and Lemma~\ref{L3.3}, it follows that
$$
	h_1
	\left(
		\sigma
		J_1^{1 - \varepsilon (m_2 - 1)} (r_i)
		R^{\varepsilon (m_2 - 1) - n + 1}
	\right)
	\ge
	h_1
	\left(
		\frac{1}{R}
	\right)
$$
and
\begin{align*}
	&
	h_2
	\left(
		\sigma
		J_1^{\mu_1 - \varepsilon (m_1 + m_2 \mu_1 - 1)} (r_i)
		R^{
			\varepsilon (m_1 + m_2 \mu_1 - 1) - (n - 1) \mu_1
		}
		h_1
		\left(
			\sigma
			J_1^{1 - \varepsilon (m_2 - 1)} (r_i)
			R^{\varepsilon (m_2 - 1) - n + 1}
		\right)
	\right)
	\\
	&
	\quad
	{}
	\ge
	h_1
	\left(
		\frac{1}{R}
	\right)
\end{align*}
for all $0 \le i \le l - 1$.
Thus, if the real number $R > 0$ is large enough, then~\eqref{PL3.7.7} implies the estimate
\begin{align}
	J_1 (r_{i+1}) - J_1 (r_i)
	\ge
	{}
	&
	C
	(r_{i+1} - r_i)^{1 + m_1 \mu_2 + (m_2 - 1) \mu_1 \mu_2}
	R^{(1 - \mu_1 \mu_2)(n - 1)}
	\nonumber
	\\
	&
	{}
	\times
	h_1^{\mu_2}
	\left(
		\frac{1}{R}
	\right)
	h_2
	\left(
		\frac{1}{R}
	\right)
	J_1^{\mu_1 \mu_2} (r_i)
	\label{PL3.7.8}
\end{align}
for all $i \in \Xi_2$.
At first, let $l - 1 \in \Xi_2$ and $r_l - r_{l-1} \ge R / 4$. 
In this case,~\eqref{PL3.7.8} yields
$$
	\frac{
		J_1 (r_l) - J_1 (r_{l-1})
	}{
		J_1^{\mu_1 \mu_2} (r_{l-1})
	}
	\ge
	C
	R^{n + m_1 \mu_2 - (n - m_2) \mu_1 \mu_2}
	h_1^{\mu_2} 
	\left(
		\frac{1}{R}
	\right)
	h_2
	\left(
		\frac{1}{R}
	\right).
$$
Since
$$
	\frac{
		J_1^{1 - \mu_1 \mu_2} (r_{l-1})
		-
		J_1^{1 - \mu_1 \mu_2} (r_l)
	}{
		\mu_1 \mu_2 - 1
	}
	=
	\int_{
		J_1 (r_{l-1})
	}^{
		J_1 (r_l)
	}
	\frac{
		d\zeta
	}{
		\zeta^{\mu_1 \mu_2}
	}
	\ge
	\frac{
		J_1 (r_l) - J_1 (r_{l-1})
	}{
		2^{\mu_1 \mu_2}
		J_1^{\mu_1 \mu_2} (r_{l-1})
	},
$$
this immediately leads to~\eqref{L3.7.1}. Now, let $l - 1 \not\in \Xi_2$ or $r_l - r_{l-1} < R / 4$. 
Then, assuming that the real number $R > 0$ is large enough, in accordance with~\eqref{PL3.4.4} 
we obtain 
\begin{equation}
	\sum_{
		i \in \Xi_2,
		\;
		i \le l - 2
	}
	(r_{i+1} - r_i)
	\ge
	\frac{R}{4}.
	\label{PL3.7.9}
\end{equation}
Hence,~\eqref{PL3.7.8} allows one to assert that
\begin{align}
	&
	\sum_{
		i \in \Xi_2,
		\;
		i \le l - 2
	}
	\left(
		\frac{
			J_1 (r_{i+1}) - J_1 (r_i)
		}{
			J_1^{\mu_1 \mu_2} (r_i)
		}
	\right)^{
		1 / (1 + m_1 \mu_2 + (m_2 - 1) \mu_1 \mu_2)
	}
	\nonumber
	\\
	&
	\quad
	{}
	\ge
	C
	\left(
		R^{n + m_1 \mu_2 - (n - m_2) \mu_1 \mu_2} 
		h_1^{\mu_2}
		\left(
			\frac{1}{R}
		\right)
		h_2
		\left(
			\frac{1}{R}
		\right)
	\right)^{1 / (1 + m_1 \mu_2 + (m_2 - 1) \mu_1 \mu_2)}.
	\label{PL3.7.10}
\end{align}
Since $J_1 (r_0) = J_1 (R)$ and $J_1 (r_{i+1}) = 2 J_1 (r_i)$ for all $0 \le i \le l - 2$, we have
$$
	J_1^{(1 - \mu_1 \mu_2) / (1 + m_1 \mu_2 + (m_2 - 1) \mu_1 \mu_2)} (R)
	\ge
	C
	\sum_{i=0}^{l-2}
	\left(
		\frac{
			J_1 (r_{i+1}) - J_1 (r_i)
		}{
			J_1^{\mu_1 \mu_2} (r_i)
		}
	\right)^{
		1 / (1 + m_1 \mu_2 + (m_2 - 1) \mu_1 \mu_2)
	};
$$
therefore,~\eqref{PL3.7.10} implies the estimate
\begin{equation}
	J_1^{1 - \mu_1 \mu_2} (R)
	\ge
	C
	R^{n + m_1 \mu_2 - (n - m_2) \mu_1 \mu_2}
	h_1^{\mu_2} 
	\left(
		\frac{1}{R}
	\right)
	h_2
	\left(
		\frac{1}{R}
	\right).
	\label{PL3.7.11}
\end{equation}
To complete the proof, it remains to note that
$
	J_1 (2R)
	\ge
	2
	J_1 (R)
$
since $l \ge 2$ in view of inequality~\eqref{PL3.7.9}. Thus,
$$
	J_1^{1 - \mu_1 \mu_2} (R)
	-
	J_1^{1 - \mu_1 \mu_2} (2R)
	\ge
	(1 - 2^{1 - \mu_1 \mu_2})
	J_1^{1 - \mu_1 \mu_2} (R)
$$
and~\eqref{L3.7.1} follows immediately from~\eqref{PL3.7.11}.
\end{proof}

\begin{Lemma}\label{L3.8}
Let $u_1$ and $u_2$ be non-trivial solutions of~\eqref{1.1} and, moreover,~\eqref{T2.1.1} hold.  If $\mu_1 = \mu_2 = 1$, then 
\begin{equation}
	J_1^{- \delta} (R)
	-
	J_1^{- \delta} (2 R)
	\ge
	C
	\delta^{m_1 + m_2 + 1}
	R^{m_1 + m_2 - \delta (n - m_2)}
	h_1
	\left(
		\frac{1}{R}
	\right)
	h_2
	\left(
		\frac{1}{R}
	\right)
	\label{L3.8.1}
\end{equation}
for all real numbers $0 < \delta < 1$ and for all sufficiently large $R > 0$,
where the function $J_1$ is defined by~\eqref{L3.6.3}.
\end{Lemma}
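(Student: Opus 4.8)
The plan is to imitate the proof of Lemma~\ref{L3.7} as far as the hypothesis $\mu_1 \mu_2 > 1$ permits and then, in the borderline case $\mu_1 = \mu_2 = 1$, to replace the telescoping of $J_1^{1 - \mu_1 \mu_2}$ (which collapses) by a telescoping of $J_1^{-\delta}$, watching the dependence on $\delta$. Accordingly, first I would fix a large $R > 0$ with $J_1 (R) > 0$, take the least positive integer $l$ with $2^l J_1 (R) \ge J_1 (2 R)$, and construct $r_0 = R$, $r_i = \sup \{ r \in (r_{i-1}, 2R) : J_1 (r) \le 2 J_1 (r_{i-1}) \}$ for $0 < i \le l - 1$, $r_l = 2 R$, exactly as in Lemma~\ref{L3.7}; recall that then $J_1 (r_{i+1}) = 2 J_1 (r_i)$ for $0 \le i \le l - 2$, and that, after choosing $\varepsilon \in (0, 1/(m_1 + m_2 - 1))$ and splitting the indices into the ``small-step'' set $\Xi_1$ and its complement $\Xi_2$, one has~\eqref{PL3.4.8} and hence~\eqref{PL3.4.4}. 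The derivation of~\eqref{PL3.7.8} uses only~\eqref{T2.1.1} (via Lemmas~\ref{L3.4},~\ref{L3.6} and the limit~\eqref{PL3.7.12}), together with~\eqref{1.5},~\eqref{1.7} and Lemma~\ref{L3.3}, and never the inequality $\mu_1 \mu_2 > 1$; running it with $\mu_1 = \mu_2 = 1$ yields
$$
	J_1 (r_{i+1}) - J_1 (r_i)
	\ge
	C (r_{i+1} - r_i)^{m_1 + m_2} h_1 (1/R) h_2 (1/R) J_1 (r_i)
	\qquad
	\text{for all } i \in \Xi_2 .
$$

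Next I would observe that in this regime the ``long last step'' alternative of Lemma~\ref{L3.7} is vacuous once $R$ is large: if $l - 1 \in \Xi_2$ and $r_l - r_{l-1} \ge R/4$, then the displayed estimate with $i = l - 1$, combined with $J_1 (r_l) \le 2 J_1 (r_{l-1})$, would give $1 \ge C R^{m_1 + m_2} h_1 (1/R) h_2 (1/R)$, contradicting Lemma~\ref{L3.3} since $R^{m_1+m_2} \log^{-\sigma} R \to \infty$. Hence, as in~\eqref{PL3.7.9}, $\sum_{i \in \Xi_2,\, i \le l-2} (r_{i+1} - r_i) \ge R/4$ for all sufficiently large $R$. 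Now fix $\delta \in (0,1)$. Dividing the displayed estimate by $J_1^{1+\delta} (r_i)$, raising to the power $1/(m_1 + m_2)$, using $J_1 (r_i) \le J_1 (2R)$, and summing over $i \in \Xi_2$ with $i \le l - 2$ produces
$$
	\sum_{i \in \Xi_2,\, i \le l-2}
	\left(
		\frac{J_1 (r_{i+1}) - J_1 (r_i)}{J_1^{1+\delta} (r_i)}
	\right)^{1/(m_1 + m_2)}
	\ge
	C R \left( h_1 (1/R) h_2 (1/R) \right)^{1/(m_1 + m_2)} J_1^{- \delta/(m_1 + m_2)} (2R) .
$$
On the other hand, since $J_1 (r_{i+1}) - J_1 (r_i) = J_1 (r_i)$ and $J_1 (r_i) = 2^i J_1 (R)$ for $0 \le i \le l - 2$, the left-hand side is at most $\sum_{i=0}^{l-2} J_1^{-\delta/(m_1+m_2)} (r_i) = J_1^{-\delta/(m_1+m_2)} (R) \sum_{i=0}^{l-2} 2^{-i\delta/(m_1+m_2)} \le \frac{2(m_1+m_2)}{\delta} J_1^{-\delta/(m_1+m_2)} (R)$, where I use $1 - 2^{-t} \ge t/2$ on $(0,1]$. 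Comparing the two estimates gives
$$
	J_1^{-\delta/(m_1+m_2)} (R)
	\ge
	\frac{C \delta R}{m_1 + m_2} \left( h_1 (1/R) h_2 (1/R) \right)^{1/(m_1 + m_2)} J_1^{-\delta/(m_1+m_2)} (2R) .
$$

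To conclude I would use~\eqref{PL3.7.12}, i.e.\ $J_1 (2R) \le (2R)^{n - m_2}$ for large $R$, to convert the last inequality into the unconditional bound $J_1^{-\delta/(m_1+m_2)} (R) \ge \Phi$, where $\Phi := \frac{C \delta}{m_1 + m_2} R^{\,1 - \delta (n - m_2)/(m_1 + m_2)} \left( h_1 (1/R) h_2 (1/R) \right)^{1/(m_1 + m_2)}$. Moreover, for $R$ large (depending on $\delta$) the coefficient of $J_1^{-\delta/(m_1+m_2)} (2R)$ in the previous display exceeds $2$ (by Lemma~\ref{L3.3}, since $R \log^{-\sigma} R \to \infty$), so $J_1^{-\delta/(m_1+m_2)} (2R) \le \tfrac{1}{2} J_1^{-\delta/(m_1+m_2)} (R)$ and therefore $J_1^{-\delta/(m_1+m_2)} (R) - J_1^{-\delta/(m_1+m_2)} (2R) \ge \tfrac{1}{2} \Phi$. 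Since $J_1^{-\delta} = \left( J_1^{-\delta/(m_1+m_2)} \right)^{m_1+m_2}$ and $a^p - b^p \ge (a - b)^p$ whenever $a \ge b \ge 0$ and $p \ge 1$, raising this to the power $m_1 + m_2$ gives
$$
	J_1^{-\delta} (R) - J_1^{-\delta} (2R)
	\ge
	\left( \tfrac{1}{2} \Phi \right)^{m_1 + m_2}
	=
	C \delta^{m_1 + m_2} R^{\,m_1 + m_2 - \delta (n - m_2)} h_1 (1/R) h_2 (1/R) ,
$$
and, as $\delta^{m_1+m_2} \ge \delta^{m_1+m_2+1}$ for $\delta \in (0,1)$, this is~\eqref{L3.8.1}.

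The one genuinely new point compared with Lemma~\ref{L3.7} is the borderline exponent: when $\mu_1 \mu_2 = 1$ the geometric series that there telescopes to $J_1^{1-\mu_1\mu_2}(R)$ becomes $\sum 1$ and diverges, forcing the use of the shifted exponent $1 + \delta$; the factor $\delta^{-1}$ coming from $\sum_i 2^{-i\delta/(m_1+m_2)}$ is exactly what creates the positive power of $\delta$ in~\eqref{L3.8.1}, and the inequality $a^p - b^p \ge (a-b)^p$ is what carries the estimate from the exponent $\delta/(m_1+m_2)$ back to $\delta$. The main bookkeeping obstacle is to ensure that all the ``for sufficiently large $R$'' clauses --- the vacuousness of the long-last-step case, the bound $J_1 (2R) \le (2R)^{n-m_2}$, and the coefficient exceeding $2$ --- can be met simultaneously beyond a threshold that is now permitted to depend on $\delta$.
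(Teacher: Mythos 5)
Your argument is correct and keeps the paper's skeleton --- the same sequence $r_i$, the same split into $\Xi_1$ and $\Xi_2$ via the threshold $R^\varepsilon J_1^{-\varepsilon}(r_i)$, and the same key estimate \eqref{PL3.8.1} obtained by running the proof of \eqref{PL3.7.8} with $\mu_1 = \mu_2 = 1$ (your observation that this derivation never uses $\mu_1\mu_2>1$ is exactly what the paper relies on) --- but the endgame differs in two ways. First, you eliminate the case ``$l-1 \in \Xi_2$ and $r_l - r_{l-1} \ge R/4$'' by a contradiction with Lemma~\ref{L3.3} (using $J_1(r_l) \le 2 J_1(r_{l-1})$), whereas the paper handles that case directly and extracts \eqref{L3.8.1} from it; both work, and your version has the small bonus of forcing $l \ge 2$ for large $R$. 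Second, instead of inserting the exponent $1+\delta$ early through \eqref{PL3.7.12} (the paper's \eqref{PL3.8.2}) and telescoping with $J_1(2R) \ge 2 J_1(R)$, you keep the factor $J_1^{-\delta/(m_1+m_2)}(2R)$, bound it below by $(2R)^{-\delta(n-m_2)/(m_1+m_2)}$ via \eqref{PL3.7.12}, and then absorb it by demanding that the coefficient exceed $2$, finishing with the superadditivity $a^p - b^p \ge (a-b)^p$. This yields the slightly stronger power $\delta^{m_1+m_2}$, but the absorption step is the one place where your ``sufficiently large $R$'' threshold genuinely depends on $\delta$, whereas the paper's proof keeps the threshold independent of $\delta$ and confines the $\delta$-dependence to the constant $\delta^{m_1+m_2+1}$. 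This is harmless for the only application (Theorem~\ref{T2.3}, where $\delta$ is fixed before $R_0$ is chosen), and if you want the uniform threshold you can recover it inside your own framework: since \eqref{PL3.7.9} gives $l \ge 2$, you have $J_1(2R) \ge 2 J_1(R)$, hence $J_1^{-\delta}(R) - J_1^{-\delta}(2R) \ge (1 - 2^{-\delta}) J_1^{-\delta}(R) \ge (\delta/2)\,\Phi^{m_1+m_2}$, which is precisely the stated bound with $\delta^{m_1+m_2+1}$.
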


\begin{proof}
Take a real number $R > 0$ such that $J_1 (R) > 0$. 
Since $u_1$ and $u_2$ are non-trivial solutions of~\eqref{1.1}, the last inequality is obviously valid for all $R$ in a neighborhood of infinity.
Also let $r_i$, $i = 0, \ldots, l$, be the finite sequence of real numbers constructed in the proof of Lemma~\ref{L3.7}.

We denote by $\Xi_1$ the set of integers $0 \le i \le l - 1$ such that
$
	r_{i+1} - r_i < R^\varepsilon J_1^{- \varepsilon} (r_i),
$
where $\varepsilon \in (0, 1 / (m_1 + m_2 - 1))$ is a real number satisfying the condition
$n + m_1 - \delta (n - m_2) > 0$.
Also let $\Xi_2$ be the set of all other integers $0 \le i \le l - 1$.

For sufficiently large $R > 0$, using the argument given in the proof of~\eqref{PL3.7.8}, we obtain
\begin{equation}
	J_1 (r_{i+1}) - J_1 (r_i)
	\ge
	C
	(r_{i+1} - r_i)^{m_1 + m_2}
	h_1
	\left(
		\frac{1}{R}
	\right)
	h_2
	\left(
		\frac{1}{R}
	\right)
	J_1 (r_i)
	\label{PL3.8.1}
\end{equation}
for all $i \in \Xi_2$. 
Let $0 < \delta < 1$ be a real number.
From~\eqref{PL3.7.12}, it follows that
$$
	\frac{
		J_1 (r_i)
	}{
		r_i^{n - m_2}
	}
	\ge
	\left(
		\frac{
			J_1 (r_i)
		}{
			r_i^{n - m_2}
		}
	\right)^{1 + \delta}
$$
for all $0 \le i \le l - 1$ if $R > 0$ is large enough,
whence in accordance with~\eqref{PL3.8.1} we obtain
\begin{equation}
	J_1 (r_{i+1}) - J_1 (r_i)
	\ge
	C
	(r_{i+1} - r_i)^{m_1 + m_2}
	r_i^{- \delta (n - m_2)}
	h_1
	\left(
		\frac{1}{R}
	\right)
	h_2
	\left(
		\frac{1}{R}
	\right)
	J_1^{1 + \delta} (r_i)
	\label{PL3.8.2}
\end{equation}
for all $i \in \Xi_2$. 

At first, assume that $l - 1 \in \Xi_2$ and $r_l - r_{l-1} \ge R / 4$. In this case,~\eqref{PL3.8.2} yields
$$
	\frac{
		J_1 (r_l) - J_1 (r_{l-1})
	}{
		J_1^{1 + \delta} (r_i)
	}
	\ge
	C
	R^{m_1 + m_2 - \delta (n - m_2)}
	h_1
	\left(
		\frac{1}{R}
	\right)
	h_2
	\left(
		\frac{1}{R}
	\right).
$$
Since
$$
	\frac{
		J_1^{- \delta} (R)
		-
		J_1^{- \delta} (2 R)
	}{
		\delta
	}
	\ge
	\frac{
		J_1^{- \delta} (r_{l-1})
		-
		J_1^{- \delta} (r_l)
	}{
		\delta
	}
	=
	\int_{
		J_1 (r_{l-1})
	}^{
		J_1 (r_l)
	}
	\frac{
		d \zeta
	}{
		\zeta^{1 + \delta}
	}
	\ge
	\frac{
		J_1 (r_l) - J_1 (r_{l-1})
	}{
		2^{1 + \delta}
		J_1^{1 + \delta} (R)
	},
$$
this immediately implies~\eqref{L3.8.1}.
Now, let $l - 1 \not\in \Xi_2$ or $r_l - r_{l-1} < R / 4$. 
As in the proof of Lemma~\ref{L3.7}, we can also assume that $R > 0$ is large enough for~\eqref{PL3.4.4} to be valid.
Therefore, one can assert that~\eqref{PL3.7.9} holds.

Taking into account~\eqref{PL3.8.2} and the fact that $J_1 (r_{i+1}) = 2 J_1 (r_i)$ for all $0 \le i \le l - 2$, we have
$$
	J_1^{- \delta / (m_1 + m_2)} (r_i)
	\ge
	C
	(r_{i+1} - r_i)
	\left(
		R^{- \delta (n - m_2)}
		h_1
		\left(
			\frac{1}{R}
		\right)
		h_2
		\left(
			\frac{1}{R}
		\right)
	\right)^{1 / (m_1 + m_2)}
$$
for all $i \in \Xi_2$ satisfying the condition $i \le l - 2$, whence in accordance with~\eqref{PL3.7.9} it follows that
$$
	\sum_{
		i \in \Xi_2,
		\;
		i \le l - 2
	}
	J_1^{- \delta / (m_1 + m_2)} (r_i)
	\ge
	C
	\left(
		R^{m_1 + m_2 - \delta (n - m_2)}
		h_1
		\left(
			\frac{1}{R}
		\right)
		h_2
		\left(
			\frac{1}{R}
		\right)
	\right)^{1 / (m_1 + m_2)}.
$$
Combining the last estimate with the evident inequality
$$
	\sum_{
		i \in \Xi_2,
		\;
		i \le l - 2
	}
	J_1^{- \delta / (m_1 + m_2)} (r_i)
	\le
	J_1^{- \delta / (m_1 + m_2)} (R)
	\sum_{i=0}^{l-1}
	2^{- \delta i / (m_1 + m_2)}
	\le
	\frac{
		J_1^{- \delta / (m_1 + m_2)} (R)
	}{
		1 - 2^{- \delta / (m_1 + m_2)}
	},
$$
we obtain
\begin{equation}
	J_1^{- \delta} (R)
	\ge
	C
	\left(
		1 - 2^{- \delta / (m_1 + m_2)}
	\right)^{m_1 + m_2}
	R^{m_1 + m_2 - \delta (n - m_2)}
	h_1
	\left(
		\frac{1}{R}
	\right)
	h_2
	\left(
		\frac{1}{R}
	\right).
	\label{PL3.8.3}
\end{equation}
From~\eqref{PL3.7.9}, it follows that $l \ge 2$ and, therefore, $J_1 (2 R) \ge 2 J_1 (R)$.
Thus,
$$
	J_1^{- \delta} (R)
	-
	J_1^{- \delta} (2 R)
	\ge
	(1 - 2^{- \delta})
	J_1^{- \delta} (R)
$$
and~\eqref{PL3.8.3} implies~\eqref{L3.8.1}.
\end{proof}

\begin{proof}[Proof of Theorem~\ref{T2.1}]
Assume the converse, let $u_1$ and $u_2$ be a non-trivial solutions of~\eqref{1.1}.  By Lemma~\ref{L3.7}, there exists a real number $R_0$ such that $J_1 (R_0) > 0$ and, moreover,~\eqref{L3.7.1} holds for all $R \ge R_0$.

Let us put $R_i = 2^i R_0$, $i = 1,2,\ldots$. Applying Lemma~\ref{L3.7}, we have
$$
	J_1^{1 - \mu_1 \mu_2} (R_i)
	-
	J_1^{1 - \mu_1 \mu_2} (R_{i+1})
	\ge
	C
	R_i^{n + m_1 \mu_2 - (n - m_2) \mu_1 \mu_2}
	h_1^{\mu_2} 
	\left(
		\frac{1}{R_i}
	\right)
	h_2
	\left(
		\frac{1}{R_i}
	\right),
	\;
	i = 0,1,2,\ldots,
$$
whence in accordance with the inequality
\begin{align*}
&
	R_i^{n + m_1 \mu_2 - (n - m_2) \mu_1 \mu_2}
	h_1^{\mu_2} 
	\left(
		\frac{1}{R_i}
	\right)
	h_2
	\left(
		\frac{1}{R_i}
	\right)
	\\
	&
	\quad
	{}
	\ge
	C
	\int_{
		R_i
	}^{
		R_{i+1}
	}
	R^{n + m_1 \mu_2 - (n - m_2) \mu_1 \mu_2 - 1}
	h_1^{\mu_2} 
	\left(
		\frac{1}{R}
	\right)
	h_2
	\left(
		\frac{1}{R}
	\right)
	\,
	dR
\end{align*}
which is valid due to the monotonicity of the functions $h_ 1$ and $h_ 2$, it follows that
\begin{align*}
&
	J_1^{1 - \mu_1 \mu_2} (R_i)
	-
	J_1^{1 - \mu_1 \mu_2} (R_{i+1})
	\\
	&
	\quad
	{}
	\ge
	C
	\int_{
		R_i
	}^{
		R_{i+1}
	}
	R^{n + m_1 \mu_2 - (n - m_2) \mu_1 \mu_2 - 1}
	h_1^{\mu_2} 
	\left(
		\frac{1}{R}
	\right)
	h_2
	\left(
		\frac{1}{R}
	\right)
	\,
	dR,
	\quad
	i = 0,1,2,\ldots.
\end{align*}
Summing the last expression, we obtain
$$
	J_1^{1 - \mu_1 \mu_2} (R_0)
	-
	J_1^{1 - \mu_1 \mu_2} (\infty)
	\ge
	C
	\int_{
		R_0
	}^\infty
	R^{n + m_1 \mu_2 - (n - m_2) \mu_1 \mu_2 - 1}
	h_1^{\mu_2} 
	\left(
		\frac{1}{R}
	\right)
	h_2
	\left(
		\frac{1}{R}
	\right)
	\,
	dR.
$$
It is easy to see that~\eqref{T2.1.2} is equivalent to the relation
$$
	\int_{
		R_0
	}^\infty
	R^{n + m_1 \mu_2 - (n - m_2) \mu_1 \mu_2 - 1}
	h_1^{\mu_2} 
	\left(
		\frac{1}{R}
	\right)
	h_2
	\left(
		\frac{1}{R}
	\right)
	\,
	dR
	=
	\infty,
$$
while
$$
	J_1^{1 - \mu_1 \mu_2} (R_0)
	-
	J_1^{1 - \mu_1 \mu_2} (\infty)
	<
	\infty.
$$
This contradiction completes the proof.
\end{proof}

\begin{proof}[Proof of Corollary~\ref{C2.1}]
In view of Lemma~\ref{L3.3}, there exists a real number $\lambda > 1$ such that
\begin{equation}
	\zeta^{\lambda_1 \lambda_2}
	g_1 (\zeta)
	g_2^{\lambda_1} (\zeta)
	\ge
	\zeta^\lambda
	\label{PC2.2.1}
\end{equation}
for all $\zeta > 0$ in a neighborhood of infinity.
This obviously implies the validity of condition~\eqref{T2.1.1}.
To complete the proof, it remains to use Theorem~\ref{T2.1}.
\end{proof}

\begin{proof}[Proof of Theorem~\ref{T2.2}]
Putting $r_1 = R / 2$ and $r_2 = R$ in Lemma~\ref{L3.1}, we obtain
$$
	\frac{
		C
	}{
		R^{m_1}
	}
	\int_{B_R}
	|u_1|
	\,
	dx
	\ge
	\int_{
		B_{r_1}
	}
	f_1 (|u_2|)
	\,
	dx
$$
for all real numbers $R > 0$.
In view of the condition $m_1 \ge n$, the inequality
$$
	\frac{
		1
	}{
		R^n
	}
	\int_{B_R}
	|u_1|
	\,
	dx
	\ge
	\frac{
		1
	}{
		R^{m_1}
	}
	\int_{B_R}
	|u_1|
	\,
	dx
$$
holds for all real numbers $R \ge 1$. Thus, we have
$$
	\frac{
		C
	}{
		R^n
	}
	\int_{B_R}
	|u_1|
	\,
	dx
	\ge
	\int_{
		B_{r_1}
	}
	f_1 (|u_2|)
	\,
	dx
$$
for all $R \ge 1$.
By Lemma~\ref{L3.4}, the left-hand side of the last expression tend to zero as $R \to \infty$;
therefore, $u_2 = 0$ almost everywhere in $\mathbb R^n$. 

Since the triviality of one of the functions $u_i$, $i = 1, 2$ implies the triviality of the other one, this completes the proof.
\end{proof}

\begin{proof}[Proof of Corollary~\ref{C2.2}]
As mentioned above, Lemma~\ref{L3.3} and the condition $\lambda_1 \lambda_2 > 1$ imply the validity of~\eqref{PC2.2.1} for all sufficiently large $\zeta > 0$. Thus,~\eqref{T2.1.1} holds and the proof is completed by applying Theorem~\ref{T2.2}.
\end{proof}

\begin{proof}[Proof of Theorem~\ref{T2.3}]
Arguing by contradiction, we assume that~\eqref{1.1} has non-trivial solutions $u_1$ and $u_2$.
Take a real number $\delta > 0$ satisfying the inequality 
$$
	m_1 + m_2 - \delta (n - m_2) > 0.
$$ 
By Lemma~\ref{L3.8}, there exists a real number $R_0 > 0$ such that estimate~\eqref{L3.8.1} is valid for all $R \ge R_0$.
Since $u_1$ and $u_2$ are non-trivial solutions, one can also assume that $J_1 (R_0) > 0$.
As in the proof of Theorem~\ref{T2.1}, let $R_i = 2^i R_0$, $i = 1,2,\ldots$. 
We have
$$
	J_1^{- \delta} (R_i)
	-
	J_1^{- \delta} (R_{i+1})
	\ge
	C
	\delta^{m_1 + m_2 + 1}
	R_i^{m_1 + m_2 - \delta (n - m_2)}
	h_1
	\left(
		\frac{1}{R_i}
	\right)
	h_2
	\left(
		\frac{1}{R_i}
	\right),
	\quad
	i = 0,1,2,\ldots,
$$
whence it follows that
$$
	J_1^{- \delta} (R_0)
	-
	J_1^{- \delta} (\infty)
	\ge
	C
	\delta^{m_1 + m_2 + 1}
	\sum_{i=0}^\infty
	R_i^{m_1 + m_2 - \delta (n - m_2)}
	h_1
	\left(
		\frac{1}{R_i}
	\right)
	h_2
	\left(
		\frac{1}{R_i}
	\right).
$$
According to Lemma~\ref{L3.3}, the right-hand side of the last expression is equal to infinity, while the left-hand side is bounded from above by $J_1^{- \delta} (R_0)$.

This contradiction proves the theorem.
\end{proof}

\begin{proof}[Proof of Corollary~\ref{C2.3}]
By Lemma~\ref{L3.3}, the inequality $\lambda_1 \lambda_2 > 1$ implies~\eqref{T2.1.1}. Thus, to complete the proof, it remains to use Theorem~\ref{T2.3}.
\end{proof}

\begin{proof}[Proof of Corollary~\ref{C2.4}]
For $\lambda_1 = \lambda_2 = 1$, condition~\eqref{T2.1.1} takes the form~\eqref{C2.4.1}. Thus, Corollary~\ref{C2.4} follows immediately from Theorem~\ref{T2.3}.
\end{proof}

\end{document}